\documentclass[a4paper,11pt,leqno,twoside]{article}

\usepackage{amsmath, amsthm, amssymb,bm}

\numberwithin{equation}{section}


\oddsidemargin=0.0 cm
\evensidemargin=0.0 cm
\textwidth=16.0 cm
\textheight=21.0 cm


\newcommand{\bP}{\mathbb{P}}

\newcommand{\bR}{\mathbb{R}}

\newcommand{\bZ}{\mathbb{Z}}

\newcommand{\mf}[1]{\mathfrak{#1}}
\newcommand{\mr}[1]{\mathrm{#1}}
\newcommand{\mcal}[1]{\mathcal{#1}}

\def\tr{\mathrm{tr}}

\def\({ \left( }
\def\){ \right)}

\def\GBE#1{\mathrm{G{#1}E}}
\def\GUE{\mathrm{GUE}}
\def\GOE{\mathrm{GOE}}
\def\GSE{\mathrm{GSE}}





\theoremstyle{plain}
\newtheorem{thm}{Theorem}[section]
\newtheorem{prop}[thm]{Proposition}
\newtheorem{lem}[thm]{Lemma}
\newtheorem{cor}[thm]{Corollary}
\theoremstyle{definition}

\theoremstyle{conjecture}

\theoremstyle{problem}


\title{\bfseries Jack deformations of Plancherel measures  and
traceless Gaussian random matrices}
\author{\textsc{Sho MATSUMOTO} \\
 {\it Graduate School of Mathematics, Nagoya University.} \\
{\it  \small Furocho, Chikusa-ku, Nagoya, 
464-8602, JAPAN.} \\
\texttt{sho-matsumoto@math.nagoya-u.ac.jp}
}

\date{\empty}

\pagestyle{myheadings}
\markboth{Jack deformations of Plancherel measures}
{S. MATSUMOTO}

\begin{document}
\maketitle

\begin{abstract}
We study random partitions $\lambda=(\lambda_1,\lambda_2,\dots,\lambda_d)$ of $n$
whose length is not bigger than a fixed number $d$.
Suppose a random partition $\lambda$ is distributed according to 
the Jack measure, which is
a deformation of the Plancherel measure with a positive parameter $\alpha>0$.
We prove that for all $\alpha>0$,
in the limit as $n \to \infty$,
the joint distribution of scaled $\lambda_1,\dots, \lambda_d$ converges 
to the joint distribution of some random variables from a traceless Gaussian $\beta$-ensemble 
with $\beta=2/\alpha$.
We also give a short proof of Regev's asymptotic theorem
for the sum of $\beta$-powers of $f^\lambda$, the number of standard tableaux of shape $\lambda$.

\noindent
{\bf MSC-class}: primary 60C05 ; secondary 05E10 \\
{\bf Key words}: Plancherel measure, Jack measure, random matrix, random partition, 
RSK correspondence
\end{abstract}


%
\section{Introduction}
%

A random partition is studied as a discrete analogue of eigenvalues of a random matrix.
The most natural and studied random partition is a partition distributed according to
the Plancherel measure for the symmetric group.
The Plancherel measure chooses a partition $\lambda$ of $n$ with probability
\begin{equation} \label{eq:Plancherel}
\bP^{\mr{Plan}}_n(\lambda)= \frac{(f^\lambda)^2}{n!},
\end{equation}
where $f^\lambda$ is the degree of the irreducible representation of the symmetric group $\mf{S}_n$
associated with $\lambda$.
A random partition $\lambda=(\lambda_1,\lambda_2,\dots)$ 
chosen by the Plancherel measure is 
closely related to the Gaussian unitary ensemble (GUE) of random matrix theory.
The GUE matrix is a Hermitian matrix whose entries are independently distributed 
according to the normal distribution.
The probability density function for the eigenvalues $x_1 \ge \dots \ge x_d$
of the $d \times d$ GUE matrix is proportional to
\begin{equation} \label{eq:GBEdensity}
e^{-\frac{\beta}{2}(x_1^2+ \cdots +x_d^2)} \prod_{1 \le i<j \le d} (x_i-x_j)^\beta
\end{equation}
with $\beta=2$.
In \cite{BOO, Johansson, Okounkov1} (see also \cite{BDJ}), it is proved that, as $n \to \infty$, 
the joint distribution of
the scaled random variables $(\lambda_i-2\sqrt{n})n^{-1/6}$, $i=1,2,\dots,$
according to $\bP^{\mr{Plan}}_{n}$
converges to 
a distribution function $F$. 
Meanwhile, 
the joint distribution of
the scaled eigenvalues $(x_i-\sqrt{2d})\sqrt{2} d^{1/6}$
of a $d \times d$ GUE matrix converges to the same function $F$ as $d \to \infty$
(\cite{TWlevel}).
Thus, roughly speaking,
a limit distribution for $\lambda_i$ in $\bP^{\mr{Plan}}_n$
equals a limit distribution for eigenvalues $x_i$ of a GUE random matrix.

An analogue of the Plancherel measure on strict partitions (i.e., all non-zero $\lambda_i$
are distinct each other), called the shifted Plancherel measure,
is studied in \cite{MatSS1, MatSS2}, see also \cite{TW2}.
It is proved that the joint distribution of scaled $\lambda_i$ of 
the corresponding random partition also converges to the limit distribution
for a GUE matrix.
In addition, there are many recent works 
(\cite{Borodin, BorodinOlshanskiStrahov, 
JohanssonUnitary, JohanssonShape,  KerovBook, Okounkov2, TW1}),
which evinces the connection between 
Plancherel random partitions and
GUE random matrices.

In random matrix theory, there are two much-studied analogues of the GUE matrix,
called the Gaussian orthogonal (GOE) and symplectic (GSE) ensemble random matrix,
see standard references \cite{Forrester, Mehta}.
The probability density function for the eigenvalues of the GOE and GSE matrix is
proportional to the function given by \eqref{eq:GBEdensity} with $\beta=1$ and $\beta=4$,
respectively.
It is natural to consider a model of random partitions 
corresponding to the GOE and GSE matrix.
This motivation is not new and one may recognize it in \cite{BR1, BR2, BR3, FNR, FR}.
In the present paper, we deal with
a ``$\beta$-version'' of the Plancherel measure,
called 
the {\it Jack measure} with parameter $\alpha:=2/\beta$ 
(\cite{BorodinOlshanski, FulmanJack, FulmanCharacter, Okounkov2, Strahov}).

The Jack measure with a positive real parameter $\alpha>0$ equips
to each partition $\lambda$ of $n$ the probability
$$
\mathbb{P}^{\mr{Jack},\alpha}_n(\lambda)= 
\frac{\alpha^n n!}{c_{\lambda}(\alpha) c'_{\lambda}(\alpha)}.
$$
Here $c_{\lambda}(\alpha)$ and $c'_{\lambda}(\alpha)$ are defined by \eqref{eq:JackHook} below
and are $\alpha$-analogues of the hook-length product of $\lambda$.
We notice that
the Jack measure with parameter $\alpha=1$ agrees the Plancherel measure $\bP^{\mr{Plan}}_n$
because $\frac{n!}{c_{\lambda}(1)}=\frac{n!}{c'_{\lambda}(1)}=f^\lambda$.
One may regard a random partition distributed according to
the Jack measure with parameter $\alpha=2$ and $\alpha=1/2$
as a discrete analogue of the GOE and GSE matrix, respectively.
More generally, for any positive real number $\beta>0$, 
the Jack measure with $\alpha=2/\beta$ is the counterpart of the Gaussian $\beta$-ensemble 
(G$\beta$E)
with the probability density function proportional to \eqref{eq:GBEdensity}.

We are interested in finding out an explicit connection between Jack measures and 
the G$\beta$E.
In the present paper, we deal with  random partitions
with at most $d$ non-zero $\lambda_j$'s, where $d$ is a fixed positive integer.
Let $\mcal{P}_n(d)$ be the set of such partitions of $n$, i.e., 
$\lambda \in \mcal{P}_n(d)$ is a weakly-decreasing $d$-length sequence 
$(\lambda_1,\dots, \lambda_d)$ of non-negative integers such that $\lambda_1+ \cdots +\lambda_d=n$.
Let  $\lambda^{(n)}=(\lambda_1^{(n)}, \dots, \lambda_d^{(n)})$
 be a random partition in $\mcal{P}_n(d)$ chosen with 
the Jack measure.
Then, for each $1 \le i \le d$, the function $\lambda^{(n)} \mapsto \lambda_i^{(n)}$ 
defines a random variable on $\mcal{P}_n(d)$.
\'{S}niady \cite{Sniady} proved that, if $\alpha=1$ (the Plancherel case),
 the joint distribution of the random variables
$\(\sqrt{\frac{d}{n}} (\lambda_i^{(n)} -\frac{n}{d})\)_{1 \le i \le d}$ converges,
as $n \to \infty$, to the joint distribution of the eigenvalue of a $d \times d$ 
traceless GUE matrix.
(Note that our definition of the probability density function \eqref{eq:GBEdensity} 
with $\beta=2$ is slightly different from \'{S}niady's one.)
Here the traceless GUE matrix is a GUE matrix whose trace is zero.

Our goal in the present paper is to extend \'{S}niady's result to 
Jack measures with any parameter $\alpha$.
Specifically, 
let  a random partition $\lambda^{(n)} \in \mcal{P}_n(d)$ 
to be chosen in the Jack measure.
Then, we prove that
the joint distribution of the random variables 
$\(\sqrt{\frac{\alpha d}{n}} (\lambda_i^{(n)} -\frac{n}{d})\)_{1 \le i \le d}$ converges
to the joint distribution of eigenvalues in the traceless $\GBE{\beta}$ with $\beta=2/\alpha$.
The explicit statement of our main result is given in \S \ref{sec:mainresult}
and its proof is given in \S \ref{sec:Proof}.

In \S \ref{sec:RSK}, we focus on Jack measures with $\alpha=2$ and $\alpha=\frac{1}{2}$.
These are discrete analogues of GOE and GSE random matrices.
Via the RSK correspondence
between permutations and pairs of standard Young tableaux, 
we see connections with random involutions.

In the final section \S \ref{sec:Regev}, 
we give a short proof of Regev's asymptotic theorem.
Regev \cite{Regev} gave an asymptotic behavior for the sum 
$$
\sum_{\lambda \in \mcal{P}_n(d)} (f^\lambda)^\beta
$$
in the limit $n \to \infty$.
In this limit value, the normalization constant of the traceless $\GBE{\beta}$ appears.
Regev's asymptotic theorem is an important classical result which indicates
a connection between Plancherel random partitions and random matrix theory.
Applying the technique used in the proof of our main result,
we obtain a short proof of Regev's asymptotic theorem.

Throughout this paper, we let $d$ to be a fixed positive integer.

%
\section{Main result} \label{sec:mainresult}
%

\subsection{Jack measures with parameter $\alpha>0$}

We review  fundamental notations for partitions according to \cite{Sagan, Mac}.
A partition $\lambda=(\lambda_1,\lambda_2,\dots)$ 
is a weakly decreasing sequence of non-negative integers
such that $\lambda_j=0$ for $j$ sufficiently large.
Put
$$
\ell(\lambda) =\# \{ j \ge 1  \ | \ \lambda_j>0\}, \qquad
|\lambda|=\sum_{j \ge 1} \lambda_j
$$
and call them the length and weight of $\lambda$, respectively.
If $|\lambda|=n$, we say that $\lambda$ is a partition of $n$.
We identify $\lambda$ with the corresponding Young diagram 
$\{(i,j) \in \bZ^2 \ | \ 1 \le i \le \ell(\lambda), \ 1 \le j \le \lambda_i\}$.
We write  $(i,j) \in \lambda$ if $(i,j)$ is contained in the Young diagram of $\lambda$.
Denote by $\lambda'=(\lambda'_1,\lambda'_2,\dots)$ 
the conjugate partition of $\lambda$, i.e.,
$(i,j) \in \lambda'$ if and only if $(j,i) \in \lambda$.

Let $\alpha$ be a positive real number.
For each partition $\lambda$, 
we put
(\cite[VI. (10.21)]{Mac})
\begin{equation} \label{eq:JackHook}
c_{\lambda}(\alpha)= \prod_{(i,j) \in \lambda} (\alpha(\lambda_i-j)+(\lambda_j'-i)+1), \qquad
c'_{\lambda}(\alpha)= \prod_{(i,j) \in \lambda} (\alpha(\lambda_i-j)+(\lambda_j'-i)+\alpha).
\end{equation}
Let $\mcal{P}_n$ be the set of all partitions of $n$.
Define
\begin{equation} \label{eq:defJackMeasure}
\bP^{\mr{Jack},\alpha}_{n}(\lambda)= \frac{ \alpha^n n!}{c_{\lambda}(\alpha) c'_{\lambda}(\alpha)}
\end{equation}
for each $\lambda \in \mcal{P}_n$.
This is a probability measure on $\mcal{P}_n$, i.e., 
$\sum_{\lambda \in \mcal{P}_n}\bP^{\mr{Jack},\alpha}_{n}(\lambda) =1$,
see \cite[VI (10.32)]{Mac}.
We call this the {\it Jack measure with parameter $\alpha$} (\cite{FulmanJack, FulmanCharacter}).
This is sometimes called the Plancherel measure with parameter $\theta :=\alpha^{-1}$
(\cite{BorodinOlshanski, Strahov}).
The terminology ``Jack measure'' is derived from Jack polynomials (\cite[VI.10]{Mac}).

When $\alpha=1$, we have $c_{\lambda}(1)= c'_{\lambda}(1)=H_{\lambda}$, 
where 
$$
H_{\lambda}=
\prod_{(i,j) \in \lambda} ((\lambda_i-j)+(\lambda_j'-i)+1)
$$
is the hook-length product.
By the well-known hook formula (see e.g. \cite[Theorem 3.10.2]{Sagan})
\begin{equation} \label{eq:hookformula}
f^\lambda=n! / H_\lambda,
\end{equation}
the measure $\bP^{\mr{Jack},1}_{n}$ is just the ordinary Plancherel measure $\bP^{\mr{Plan}}_n$ 
defined in \eqref{eq:Plancherel}.
The measure $\bP^{\mr{Jack},2}_n$ is the Plancherel measure
associated with the Gelfand pair $(\mf{S}_{2n},K_n)$, 
where $K_n (=\mf{S}_2 \wr \mf{S}_n)$ 
is the hyperoctahedral group in $\mf{S}_{2n}$, see \cite[\S 4.4]{FulmanCharacter}.
From the equality $c'_{\lambda}(\alpha)= \alpha^{|\lambda|} c_{\lambda'}(\alpha^{-1})$,
we have the duality
$$
\bP^{\mr{Jack},\alpha}_{n}(\lambda)=\bP^{\mr{Jack},\alpha^{-1}}_{n}(\lambda'),
$$
for any $\lambda \in \mcal{P}_n$ and $\alpha >0$.

Denote by $\mcal{P}_n(d)$ 
the set of partitions $\lambda$ in $\mcal{P}_n$
of length $\le d$.
We consider the restricted Jack measure with parameter $\alpha$ on $\mcal{P}_n(d)$:
\begin{equation}
\bP^{\mr{Jack},\alpha}_{n,d}(\lambda)= 
\frac{1}{C_{n,d}(\alpha)}
\frac{1}{c_{\lambda}(\alpha)c'_{\lambda}(\alpha)},
\qquad \lambda \in \mcal{P}_n(d),
\end{equation}
where 
\begin{equation} \label{eq:ConstantC}
C_{n,d}(\alpha)= \sum_{\mu \in \mcal{P}_n(d)} 
\frac{1}{c_{\mu}(\alpha)c'_{\mu}(\alpha)}.
\end{equation}
By the definition \eqref{eq:defJackMeasure} of the Jack measure,
 $C_{n,d}(\alpha)= (\alpha^n n!)^{-1}$ if $d \ge n$.

\subsection{Traceless Gaussian matrix ensembles}

Let
$$
\mf{H}_{d} =
\{(x_1,\dots,x_d) \in \bR^d \ | \ x_1 \ge \cdots \ge x_d, \  x_1+ \cdots +x_d=0\}.
$$
and let $\beta$ be a positive real number.
We equip the set $\mf{H}_d$
with  the probability density function 
\begin{equation} \label{eq:tlGBEdensity}
\frac{1}{Z_d(\beta)} e^{-\frac{\beta}{2} \sum_{j=1}^d x_j^2} 
\prod_{1 \le j <k \le d} (x_j-x_k)^\beta,
\end{equation}
where the normalization constant $Z_d(\beta)$ is defined by 
\begin{equation} \label{eq:ConstantZ}
Z_{d}(\beta)=\int_{\mf{H}_{d}} e^{-\frac{\beta}{2} \sum_{j=1}^d x_j^2} 
\prod_{1 \le j <k \le d} (x_j-x_k)^\beta d x_1 \cdots d x_{d-1}.
\end{equation}
Here the integral runs over $(x_1,\dots,x_{d-1}) \in \bR^{d-1}$ such that
$(x_1,\dots,x_d) \in \mf{H}_d$ with $x_d:=-(x_1 + \cdots+x_{d-1})$.
The explicit expression of $Z_d(\beta)$ is obtained in \cite{Regev} but we do not need it here.
We call the set $\mf{H}_{d}$ with probability density \eqref{eq:tlGBEdensity}
the {\it traceless Gaussian $\beta$-ensemble} ($\GBE{\beta}_0$).

If $\beta=1,2$, or $4$, the $\GBE{\beta}_0$ 
gives the distribution of the eigenvalues 
of a traceless Gaussian random matrix $X$ as follows
(see 
\cite{Forrester, Mehta}).

Let $\beta=1$.
We equip the space of $d \times d$ symmetric real matrices $X$ such that $\tr X=0$
with the probability density function
proportional to $e^{-\frac{1}{2} \tr (X^2)}$.
Then we call the random matrix $X$ a traceless Gaussian orthogonal ensemble 
($\GOE_0$) random matrix.
Let $\beta=2$.
Then we consider the space of $d \times d$ Hermitian complex matrices $X$ such that $\tr X=0$ 
with the probability density function
proportional to $e^{- \tr (X^2)}$.
We call $X$ a traceless Gaussian unitary ensemble ($\GUE_0$) random matrix.
Let $\beta=4$.
Then we consider the space of $d \times d$ Hermitian quaternion matrices $X$ such that $\tr X=0$
with the probability density function proportional to $e^{- \tr (X^2)}$.
We call $X$ a traceless Gaussian symplectic ensemble ($\GSE_0$) random matrix.

The $\GOE_0$, $\GUE_0$, and $\GSE_0$ matrices are 
the restriction of the ordinary GOE, GUE, GSE matrices   
to matrices whose trace is zero.
From the well-known fact in random matrix theorey,
the probability density function of eigenvalues of $X$ is given by 
\eqref{eq:tlGBEdensity} with $\beta=1$ ($\GOE_0$), $\beta=2$ ($\GUE_0$), or 
$\beta=4$ ($\GSE_0$).
We note that 
for general $\beta>0$, Dumitriu and Edelman \cite{DE} give tridiagonal matrix models
for Gaussian $\beta$-ensembles.

\subsection{Main theorem}

Let $(x_{1}^{\GBE{\beta}_0}, x_{2}^{\GBE{\beta}_0}, \dots,x_{d}^{\GBE{\beta}_0})$ be 
a sequence of random variables according to the $\GBE{\beta}_0$.
Equivalently, the joint probability density function for $(x_i^{\GBE{\beta}_0})_{1 \le i \le d}$
is given by \eqref{eq:tlGBEdensity}.
Our main result is as follows.

\begin{thm} \label{maintheorem}
Let $\alpha$ be any positive real number and put $\beta=2/\alpha$.
Let $\lambda^{(n)}=(\lambda_1^{(n)},\dots,\lambda_d^{(n)})$ be 
a random partition in $\mcal{P}_n(d)$  chosen with probability 
$\bP^{\mr{Jack},\alpha}_{n,d}(\lambda^{(n)})$.
Then, as $n \to \infty$,
the random variables 
$$
\( \sqrt{\frac{\alpha d}{n}}\(\lambda_{i}^{(n)} -\frac{n}{d} \)\)_{1 \le i \le d}
$$
converge to $\( x_{i}^{\GBE{\beta}_0}\)_{1 \le i \le d}$
in joint distribution.
\end{thm}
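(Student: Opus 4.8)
The plan is to work directly with the un-normalized weight $w(\lambda)=1/\big(c_\lambda(\alpha)c'_\lambda(\alpha)\big)$ and to show that, after the scaling of the theorem, the discrete law $\bP^{\mr{Jack},\alpha}_{n,d}$ converges to the density \eqref{eq:tlGBEdensity}. Writing $\lambda_i=\frac{n}{d}+\sqrt{\frac{n}{\alpha d}}\,x_i$, the quantities $x_i=\sqrt{\frac{\alpha d}{n}}\big(\lambda_i-\frac{n}{d}\big)$ are exactly the variables in the statement and satisfy $\sum_i x_i=0$ because $|\lambda|=n$. As $\lambda$ runs over $\mcal{P}_n(d)$, the point $(x_1,\dots,x_d)$ runs over a lattice in the hyperplane $\{\sum_i x_i=0\}$ with mesh $\sqrt{\frac{\alpha d}{n}}$ in each coordinate, which tends to $0$. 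Since the target law has a continuous density on $\mf{H}_d$, it suffices to prove that $w(\lambda)$, rescaled and multiplied by the mesh volume $(\alpha d/n)^{(d-1)/2}$, converges pointwise to a constant multiple of \eqref{eq:tlGBEdensity}, together with a uniform bound that legitimizes passing the normalization $C_{n,d}(\alpha)$ and the test integrals to the limit.

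The first step is an exact product formula for $c_\lambda(\alpha)c'_\lambda(\alpha)$ when $\ell(\lambda)\le d$. Reorganizing the arm--leg product \eqref{eq:JackHook} over the rows and telescoping, one finds that it factors (up to the factor $\alpha^{2n}$ and $\lambda$-independent constants) into a product of individual factors $\prod_{i=1}^{d}\Gamma\big(\lambda_i+1+\tfrac{d-i}{\alpha}\big)\Gamma\big(\lambda_i+\tfrac{d-i+1}{\alpha}\big)$ --- two Gamma (generalized-factorial) factors per row --- and a product $\prod_{1\le i<j\le d}$ of ratios of Gamma functions depending only on the differences $\lambda_i-\lambda_j$. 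Combining the two factors of \eqref{eq:JackHook} is precisely what makes the pairwise ratios behave, via $\Gamma(z+s)/\Gamma(z)\sim z^{s}$, like $(\lambda_i-\lambda_j)^{\beta}$ with $\beta=2/\alpha$, which will become the Vandermonde.

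Next I would carry out the asymptotics. Substituting $\lambda_i=\frac{n}{d}+\sqrt{\frac{n}{\alpha d}}\,x_i$ and applying Stirling's formula to the two Gamma factors in each row, the terms linear in $x_i$ either cancel by $\sum_i x_i=0$ or are $O(n^{-1/2})$, while the quadratic terms produce $\sum_i(\lambda_i-\frac{n}{d})^2\psi'(\frac{n}{d})\sim\frac{1}{\alpha}\sum_i x_i^2$; since this sits in $\log(1/w)$ and $\frac{1}{\alpha}=\frac{\beta}{2}$, it assembles into $\exp\big(-\tfrac{\beta}{2}\sum_i x_i^2\big)$. This is where the scaling $\sqrt{\alpha d/n}$, the two-Gamma-per-row structure, and the relation $\beta=2/\alpha$ are all used. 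For the pairwise part, $\lambda_i-\lambda_j=\sqrt{\frac{n}{\alpha d}}(x_i-x_j)$ with the $O(1)$ shifts negligible against the $\sqrt{n}$ scale, so the ratios yield $\prod_{i<j}(x_i-x_j)^{\beta}$ up to an explicit power of $n$. Bookkeeping all powers of $n$ against the mesh volume leaves an $x$-independent constant.

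The normalization is then identified by a Riemann-sum argument: the rescaled $C_{n,d}(\alpha)$ converges to $Z_d(\beta)$ of \eqref{eq:ConstantZ}, so the constant above is exactly $1/Z_d(\beta)$ and the pointwise limit is the $\GBE{\beta}_0$ density. I expect the main difficulty to lie not in the leading-order computation but in its uniformity: one must make the Stirling expansion uniform in $\lambda$ and, above all, produce a Gaussian-type upper bound on $w(\lambda)$, valid uniformly in $n$ and summable over the lattice, so that partitions far from the balanced shape $(\frac{n}{d},\dots,\frac{n}{d})$ (large $|x_i|$) contribute negligibly. This domination is what upgrades pointwise convergence of the weights to convergence of $C_{n,d}(\alpha)$ and of the joint distribution, finishing the proof.
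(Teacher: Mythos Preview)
Your plan is correct and is essentially the paper's own proof: the Gamma-product factorization you describe is Lemma~\ref{lem:ccExplicit}, your Stirling/Vandermonde asymptotics are Lemma~\ref{lem:DenstiyAsymptotic1}, and the Riemann-sum passage for $C_{n,d}(\alpha)$ is Lemma~\ref{lem:SumLimit}. The one step you leave open---the uniform domination---is handled in \S\ref{subsec:Sniady} not by a Gaussian bound but by introducing auxiliary probability densities $\phi_{n;\theta}$ built from $(\theta)_r^{-1}(n/d)^{r}$ and proving the merely exponential estimate $\phi_{n;\theta}(y)<C e^{-|y|}$ via a monotonicity argument, which suffices since the remaining factors are polynomial in the $y_i$.
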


The case with $\alpha=1$ (and so $\beta=2$) of Theorem \ref{maintheorem} is proved in \cite{Sniady}.
(We remark that the definition of the density of a $\GUE_0$ matrix in \cite{Sniady}
is slightly different from us.)

We give the proof of Theorem \ref{maintheorem} in Section \ref{sec:Proof}.

%
\section{Jack measures with $\alpha=2$ or $\frac{1}{2}$ and RSK correspondences} \label{sec:RSK} 
%

In this section, we deal with Jack measures with parameter $\alpha=2$ and $\alpha=\frac{1}{2}$.
Our goal is to obtain a limit theorem for 
a random involutive permutation as a corollary of Theorem \ref{maintheorem}.

\begin{lem} \label{lem:Hook}
For each $\lambda \in \mcal{P}_n$ we have
\begin{equation}
c_{\lambda}(2) c'_{\lambda}(2)= H_{2\lambda}, \qquad
c_{\lambda}(1/2) c'_{\lambda}(1/2)= 2^{-2n} H_{\lambda \cup \lambda},
\end{equation}
where $2\lambda=(2\lambda_1,2\lambda_2,\dots)$
 and $\lambda \cup \lambda= (\lambda_1,\lambda_1,\lambda_2,\lambda_2,\dots)$.
\end{lem}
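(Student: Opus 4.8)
The plan is to prove the first identity $c_\lambda(2)c'_\lambda(2)=H_{2\lambda}$ by a direct box-by-box comparison of hook lengths, and then to deduce the second identity from the first by conjugation together with the duality relation $c'_\mu(\alpha)=\alpha^{|\mu|}c_{\mu'}(\alpha^{-1})$ already recorded in the excerpt.

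For the first identity I would exploit the two-to-one correspondence between the boxes of $2\lambda$ and those of $\lambda$: the box $(i,j)\in\lambda$ gives rise to the pair $(i,2j-1),(i,2j)\in 2\lambda$. The key elementary observation is that doubling the rows leaves the column heights unchanged on consecutive pairs of columns, i.e. $(2\lambda)'_{2j-1}=(2\lambda)'_{2j}=\lambda'_j$. Writing $a=\lambda_i-j$ and $l=\lambda'_j-i$ for the arm and leg of $(i,j)$ in $\lambda$, a short computation of the hook lengths $h_{2\lambda}(i,k)=((2\lambda)_i-k)+((2\lambda)'_k-i)+1$ then gives $h_{2\lambda}(i,2j)=2a+l+1$ and $h_{2\lambda}(i,2j-1)=2a+l+2$. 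Taking the product over the pair reproduces exactly the factor $(2a+l+1)(2a+l+2)$ that the box $(i,j)$ contributes to $c_\lambda(2)c'_\lambda(2)$; multiplying over all $(i,j)\in\lambda$ yields $H_{2\lambda}=c_\lambda(2)c'_\lambda(2)$.

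For the second identity I would avoid redoing the hook bookkeeping and instead reduce to the first. The geometric facts $(\lambda\cup\lambda)'=2\lambda'$ together with the conjugation-invariance $H_\mu=H_{\mu'}$ of the hook product give $H_{\lambda\cup\lambda}=H_{2\lambda'}$. Applying the first identity to $\lambda'$ in place of $\lambda$ gives $H_{2\lambda'}=c_{\lambda'}(2)c'_{\lambda'}(2)$. Finally, two applications of the duality relation, namely $c'_\lambda(1/2)=2^{-n}c_{\lambda'}(2)$ and $c'_{\lambda'}(2)=2^{n}c_\lambda(1/2)$, convert $c_{\lambda'}(2)c'_{\lambda'}(2)$ into $2^{2n}c_\lambda(1/2)c'_\lambda(1/2)$, which rearranges to the claimed $c_\lambda(1/2)c'_\lambda(1/2)=2^{-2n}H_{\lambda\cup\lambda}$.

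I do not expect a serious obstacle; the content is essentially a careful accounting of arm and leg lengths. The one delicate point is to get the pairing and the parity bookkeeping right, that is, correctly deciding which of the two new boxes carries hook length $2a+l+1$ versus $2a+l+2$, and verifying the conjugate identities $(2\lambda)'_{2j-1}=(2\lambda)'_{2j}=\lambda'_j$ and $(\lambda\cup\lambda)'=2\lambda'$. As a consistency check, one can alternatively establish the second identity directly, by factoring $1/2$ out of each of the $2n$ linear factors in $c_\lambda(1/2)c'_\lambda(1/2)$ to produce the prefactor $2^{-2n}$ and the product $\prod_{(i,j)\in\lambda}(a+2l+1)(a+2l+2)$, and then matching this to the hook lengths of $\lambda\cup\lambda$ by the same pairing argument applied to duplicated rows.
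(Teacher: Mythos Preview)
Your proposal is correct and follows essentially the same route as the paper: the paper also pairs the boxes $(i,2j-1),(i,2j)\in 2\lambda$ with $(i,j)\in\lambda$ via the observation $(2\lambda)'_{2j-1}=(2\lambda)'_{2j}=\lambda'_j$ to obtain $H_{2\lambda}=c'_\lambda(2)c_\lambda(2)$, and then derives the second identity from the first by applying the duality $c'_\mu(\alpha)=\alpha^{|\mu|}c_{\mu'}(\alpha^{-1})$ together with $H_{\lambda\cup\lambda}=H_{(\lambda\cup\lambda)'}=H_{2\lambda'}$. Your arm/leg bookkeeping and the extra consistency check are fine embellishments, but the underlying argument is the same.
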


\begin{proof}
Put $\mu=2\lambda$.
Then, since $\mu_i=2\lambda_i$ and $\mu'_{2j-1}=\mu'_{2j} = \lambda'_j$ for any $i,j \ge 1$, 
we have
\begin{align*}
H_{\mu}=& \prod_{\begin{subarray}{c} (i,j) \in \mu, \\ j: \mr{odd} \end{subarray}}
(\mu_i-j+\mu'_j-i+1) \times
\prod_{\begin{subarray}{c} (i,j) \in \mu, \\ j: \mr{even} \end{subarray}}
(\mu_i-j+\mu'_j-i+1) \\
=& \prod_{(i,j) \in \lambda} (2\lambda_i-(2j-1) +\lambda_j'-i+1) \times  
\prod_{(i,j) \in \lambda} (2\lambda_i-2j +\lambda_j'-i+1) \\
=&c'_{\lambda}(2)c_{\lambda}(2).
\end{align*}
Applying 
the equality $c'_{\lambda}(\alpha)= \alpha^n c_{\lambda'}(\alpha^{-1})$ to 
the above identity,
we see that
$2^{2n} c_{\lambda}(1/2)c'_{\lambda}(1/2)=H_{2\lambda'} =H_{(\lambda \cup \lambda)'}
=H_{\lambda \cup \lambda}$.
\end{proof}

By this lemma, the Jack measures with parameter $\alpha=2$ and $\frac{1}{2}$ are
expressed as follows.
$$
\bP^{\mr{Jack},2}_{n}(\lambda) = \frac{f^{2\lambda}}{(2n-1)!!},
\qquad 
\bP^{\mr{Jack},\frac{1}{2}}_{n}(\lambda) = \frac{f^{\lambda \cup \lambda}}{(2n-1)!!}.
$$

Recall the Robinson-Schensted-Knuth(RSK) correspondence (see e.g. \cite[Chapter 3]{Sagan}).
There exists a one-to-one correspondence 
between elements in $\mf{S}_N$ and ordered pairs of standard Young tableaux 
of same shape whose size is $N$ (\cite[Theorem 3.1.1]{Sagan}).
Let $\sigma \in \mf{S}_N$ correspond to the ordered pair $(P,Q)$ of 
standard Young tableaux of shape $\mu \in \mcal{P}_N$.
Then, the length $L^{\mr{in}}(\sigma)$ of the longest increasing subsequence in 
$(\sigma(1),\dots,\sigma(N))$ 
is equal to $\mu_1$.
Similarly, 
the length $L^{\mr{de}}(\sigma)$ of the longest decreasing subsequence in $\sigma$ 
is equal to $\mu_1'$ (\cite[Theorem 3.3.2]{Sagan}).
Furthermore, the permutation $\sigma^{-1}$ corresponds to the pair $(Q,P)$ 
(\cite[Theorem 3.6.6]{Sagan}).
In particular, there exists a one-to-one correspondence between
involutions $\sigma$ (i.e. $\sigma=\sigma^{-1})$ in $\mf{S}_N$ and
standard Young tableaux of size $N$.

Let $\sigma$ be an involution with $k$ fixed points.
Then the standard Young tableau corresponding to $\sigma$ has exactly $k$ columns of odd length
(\cite[Exercises 3.12.7(b)]{Sagan}).
Therefore, the number of fixed-point-free involutions $\sigma$ in $\mf{S}_{2n}$ such that 
$L^{\mr{in}}(\sigma) \le a$ and 
$L^{\mr{de}}(\sigma) \le 2b$
is equal to 
$$
\sum_{\begin{subarray}{c} \mu \in \mcal{P}_{2n} \\ \mu':\mathrm{even} \\
\mu_1 \le a, \  \mu_1' \le 2b
\end{subarray}} f^{\mu}
= \sum_{\begin{subarray}{c} \lambda \in \mcal{P}_n \\ 
\lambda_1 \le a,\ \ell(\lambda) \le b \end{subarray}} f^{\lambda \cup \lambda}
=\sum_{\begin{subarray}{c} \lambda \in \mcal{P}_n \\ 
\lambda_1 \le b,\ \ell(\lambda) \le a \end{subarray}} f^{2\lambda},
$$
where the first sum runs over partitions $\mu$ in $\mcal{P}_{2n}$ 
whose conjugate partition $\mu'$ is even, (i.e. all $\mu'_j$ are even,) 
satisfying $\mu_1 \le a$ and $\mu_1' \le 2b$.

Note that the values $C_{n,d}(1/2)$ and $C_{n,d}(2)$ are expressed by a matrix integral.
Using Rains' result \cite{Rains}, we have
$$
C_{n,d}(1/2)=\frac{2^{2n}}{(2n)!} \sum_{\lambda \in \mcal{P}_n(d)} f^{\lambda \cup \lambda}
= \frac{2^{2n}}{(2n)!} \int_{Sp(2d)} \tr (S)^{2n} d S,
$$
where the integral runs over the symplectic group with its normalized Haar measure.
Similarly, 
$$
C_{n,d}(2)=\frac{1}{(2n)!} \sum_{\lambda \in \mcal{P}_n(d)} f^{2\lambda}
= \frac{1}{(2n)!} \int_{O(d)} \tr (O)^{2n} d O,
$$
where the integral runs over the orthogonal group with its normalized Haar measure.

Let $\mf{S}^0_{2n}$ be the subset in $\mf{S}_{2n}$ of fixed-point-free involutions.
Equivalently, 
$$
\mf{S}_{2n}^0=\{\sigma \in \mf{S}_{2n} \ | \ \text{The cycle-type of $\sigma$ 
is $(2^n)$} \}.
$$
We pick $\sigma \in \mf{S}_{2n}^0$ at random according to the uniformly distributed probability,
i.e. the probability of all $\sigma \in \mf{S}_{2n}^0$ are equal.

\begin{lem}
\begin{enumerate}
\item
The distribution function $\bP^{\mr{Jack},1/2}_{n,d}(\lambda_1 \le h)$
of the random variable $\lambda_1$ with respect to 
$\bP^{\mr{Jack},1/2}_{n,d}(\lambda)$ 
is equal to the ratio
$$
\frac{\#\{\sigma \in \mf{S}_{2n}^0 \ | \ L^{\mr{de}}(\sigma) \le 2d \quad 
\text{and} \quad L^{\mr{in}}(\sigma) \le h  \}}
{\#\{\sigma \in \mf{S}_{2n}^0 \ | \ L^{\mr{de}}(\sigma) \le 2d \}},
$$
which is the distribution function of $L^{\mr{in}}$ for a random involution 
$\sigma \in \mf{S}_{2n}^0$
such that $L^{\mr{de}}(\sigma) \le 2d$.
\item
The distribution function $\bP^{\mr{Jack},2}_{n,d}(\lambda_1 \le h)$
of the random variable $\lambda_1$ with respect to 
$\bP^{\mr{Jack},2}_{n,d}(\lambda)$ 
is equal to the ratio
$$
\frac{\#\{\sigma \in \mf{S}_{2n}^0 \ | \ L^{\mr{in}}(\sigma) \le d \quad 
\text{and} \quad L^{\mr{de}}(\sigma) \le 2h  \}}
{\#\{\sigma \in \mf{S}_{2n}^0 \ | \ L^{\mr{in}}(\sigma) \le d \}},
$$
which is the distribution function of $\frac{1}{2}L^{\mr{de}}$ for a random involution 
$\sigma \in \mf{S}_{2n}^0$
such that $L^{\mr{in}}(\sigma) \le d$.
\end{enumerate}
\end{lem}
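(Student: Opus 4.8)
The plan is to reduce both assertions to the uniform counting identity for fixed-point-free involutions established just above, after first rewriting the two restricted Jack measures in terms of the dimensions $f^{\lambda\cup\lambda}$ and $f^{2\lambda}$. Concretely, I would start from Lemma~\ref{lem:Hook} and the hook formula \eqref{eq:hookformula}. From $c_{\lambda}(1/2)c'_{\lambda}(1/2)=2^{-2n}H_{\lambda\cup\lambda}$ and $H_{\lambda\cup\lambda}=(2n)!/f^{\lambda\cup\lambda}$ one obtains $\frac{1}{c_{\lambda}(1/2)c'_{\lambda}(1/2)}=\frac{2^{2n}}{(2n)!}f^{\lambda\cup\lambda}$, and similarly $\frac{1}{c_{\lambda}(2)c'_{\lambda}(2)}=\frac{1}{(2n)!}f^{2\lambda}$. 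Substituting these into the definitions of $\bP^{\mr{Jack},\alpha}_{n,d}$ and of $C_{n,d}(\alpha)$, the common prefactors $\frac{2^{2n}}{(2n)!}$ (resp. $\frac{1}{(2n)!}$) cancel between numerator and normalization, leaving the clean forms
\[
\bP^{\mr{Jack},1/2}_{n,d}(\lambda)=\frac{f^{\lambda\cup\lambda}}{\sum_{\mu\in\mcal{P}_n(d)}f^{\mu\cup\mu}},
\qquad
\bP^{\mr{Jack},2}_{n,d}(\lambda)=\frac{f^{2\lambda}}{\sum_{\mu\in\mcal{P}_n(d)}f^{2\mu}}.
\]

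Next I would write the distribution function as a partial sum over the event $\{\lambda_1\le h\}$: in the $\alpha=1/2$ case $\bP^{\mr{Jack},1/2}_{n,d}(\lambda_1\le h)$ equals the ratio of $\sum f^{\lambda\cup\lambda}$ over those $\lambda\in\mcal{P}_n(d)$ with $\lambda_1\le h$ to the full sum, and analogously with $f^{2\lambda}$ for $\alpha=2$. Since $\lambda\in\mcal{P}_n(d)$ is exactly the condition $\ell(\lambda)\le d$, these numerators and denominators are precisely the sums occurring in the counting identity proved above. Applying that identity with $a=h$, $b=d$ turns the $\alpha=1/2$ numerator into $\#\{\sigma\in\mf{S}_{2n}^0 : L^{\mr{in}}(\sigma)\le h,\ L^{\mr{de}}(\sigma)\le 2d\}$; removing the constraint $\lambda_1\le h$ yields the denominator $\#\{\sigma\in\mf{S}_{2n}^0 : L^{\mr{de}}(\sigma)\le 2d\}$. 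By the uniform choice of $\sigma$ this is the conditional law of $L^{\mr{in}}$ given $L^{\mr{de}}\le 2d$, which is the statement of part (1). For part (2) I would instead use the identity in its dual form $\sum_{\lambda_1\le h,\ \ell(\lambda)\le d}f^{2\lambda}=\#\{\sigma\in\mf{S}_{2n}^0 : L^{\mr{in}}(\sigma)\le d,\ L^{\mr{de}}(\sigma)\le 2h\}$, with the denominator again obtained by dropping the $\lambda_1\le h$ constraint; since the event $\{L^{\mr{de}}\le 2h\}$ is the same as $\{\tfrac12 L^{\mr{de}}\le h\}$, the resulting ratio is the conditional law of $\tfrac12 L^{\mr{de}}$ given $L^{\mr{in}}\le d$.

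Every computation here is routine bookkeeping; the single point that genuinely requires care is that the constraints on $L^{\mr{in}}$ and $L^{\mr{de}}$ enter with \emph{swapped} roles in the two cases. This swap is forced by the conjugation relation $(\lambda\cup\lambda)'=2\lambda'$ built into the counting identity: the bound $\ell(\lambda)\le d$ coming from $\mcal{P}_n(d)$ controls the first column length (hence $L^{\mr{de}}$) when $\alpha=1/2$, but controls the first row length (hence $L^{\mr{in}}$) when $\alpha=2$. Thus the only thing I really need to verify is that, in each case, the restriction $\ell(\lambda)\le d$ is matched to the correct involution statistic and the $\lambda_1\le h$ restriction to the other; once this matching is pinned down, both ratios read off directly from the identity.
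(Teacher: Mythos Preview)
Your proposal is correct and follows exactly the route the paper intends: the paper states this lemma without a separate proof because it is meant to be read off from Lemma~\ref{lem:Hook} and the RSK counting identity displayed just before the lemma, and your argument spells out precisely that deduction. In particular, your handling of the swap of roles of $L^{\mr{in}}$ and $L^{\mr{de}}$ between the two cases matches the paper's identity $\sum_{\lambda_1\le a,\ \ell(\lambda)\le b} f^{\lambda\cup\lambda}=\sum_{\lambda_1\le b,\ \ell(\lambda)\le a} f^{2\lambda}$, so nothing is missing.
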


By the above lemma and Theorem \ref{maintheorem}, we obtain the following corollary.

\begin{cor}
\begin{enumerate}
\item (The $\alpha=1/2$ case)
Let $\sigma \in \mf{S}_{2n}^0$ be a random fixed-point-free involution
with the longest decreasing subsequence of length at most $2d$.
Then, as $n \to \infty$,
the distribution of $\sqrt{\frac{d}{2n}}\(L^{\mr{in}}(\sigma) -\frac{n}{d}\)$
converges to the distribution for the largest eigenvalue of a $\GSE_0$ random matrix of size $d$.
\item (The $\alpha=2$ case)
Let $\sigma \in \mf{S}_{2n}^0$ be a random fixed-point-free involution
with the longest increasing subsequence of length at most $d$.
Then, as $n \to \infty$,
the distribution of $\sqrt{\frac{2d}{n}}\(\frac{L^{\mr{de}}(\sigma)}{2} -\frac{n}{d}\)$
converges to the distribution for the largest eigenvalue of the $\GOE_0$ random matrix of size $d$.
\end{enumerate}
\end{cor}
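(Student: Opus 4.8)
The plan is to derive both parts of the corollary as a direct combination of Theorem~\ref{maintheorem} with the immediately preceding lemma, since that lemma already expresses the distribution function of $\lambda_1$ under the relevant restricted Jack measures as distribution functions of statistics of random fixed-point-free involutions. The key observation is that, in the ordering $x_1 \ge \cdots \ge x_d$ built into the definition of $\mf{H}_d$, the variable $x_1^{\GBE{\beta}_0}$ is precisely the largest eigenvalue of the traceless $\GBE{\beta}_0$; hence extracting the $i=1$ marginal of the joint convergence in Theorem~\ref{maintheorem} already produces a limit theorem for the largest eigenvalue. I would treat the two cases in parallel: case~(1) is the specialization $\alpha = 1/2$, so $\beta = 2/\alpha = 4$ and the ensemble is $\GSE_0$; case~(2) is $\alpha = 2$, so $\beta = 1$ and the ensemble is $\GOE_0$.

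For case~(1), first I would apply Theorem~\ref{maintheorem} with $\alpha = 1/2$ and read off the $i=1$ component: the scaling prefactor is $\sqrt{\alpha d/n} = \sqrt{d/(2n)}$, so $\sqrt{d/(2n)}\,\(\lambda_1^{(n)} - n/d\)$ converges in distribution to $x_1^{\GBE{4}_0}$, the largest eigenvalue of a $\GSE_0$ matrix of size $d$. Next I would invoke part~(1) of the preceding lemma, which asserts that for each finite $n$ the distribution function $\bP^{\mr{Jack},1/2}_{n,d}(\lambda_1 \le h)$ coincides with the distribution function of $L^{\mr{in}}(\sigma)$ for a uniformly random $\sigma \in \mf{S}_{2n}^0$ conditioned on $L^{\mr{de}}(\sigma) \le 2d$. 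Since these distribution functions are literally equal at every $n$, the scaled statistic $\sqrt{d/(2n)}\,\(L^{\mr{in}}(\sigma) - n/d\)$ has, for each $n$, the same law as $\sqrt{d/(2n)}\,\(\lambda_1^{(n)} - n/d\)$, and therefore inherits the same limit.

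Case~(2) is entirely analogous with the roles of $L^{\mr{in}}$ and $L^{\mr{de}}$ interchanged. Applying Theorem~\ref{maintheorem} with $\alpha = 2$ gives scaling $\sqrt{\alpha d/n} = \sqrt{2d/n}$, so $\sqrt{2d/n}\,\(\lambda_1^{(n)} - n/d\)$ converges to $x_1^{\GBE{1}_0}$, the largest eigenvalue of a $\GOE_0$ matrix. Part~(2) of the lemma identifies $\bP^{\mr{Jack},2}_{n,d}(\lambda_1 \le h)$ with the distribution function of $\tfrac{1}{2}L^{\mr{de}}(\sigma)$ for a uniformly random $\sigma \in \mf{S}_{2n}^0$ conditioned on $L^{\mr{in}}(\sigma) \le d$; substituting this identification, $\sqrt{2d/n}\,\(\tfrac{1}{2}L^{\mr{de}}(\sigma) - n/d\)$ shares the limit of $\sqrt{2d/n}\,\(\lambda_1^{(n)} - n/d\)$, which is exactly the asserted convergence to the largest eigenvalue of the $\GOE_0$ matrix.

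Because the lemma supplies an exact equality of laws at each finite $n$ — the distribution of $\lambda_1$ under the restricted Jack measure is identical, not merely asymptotically close, to that of the relevant involution statistic — the transfer of the limit is immediate: two sequences of random variables with identical laws for each $n$ converge in distribution to the same limit. Consequently there is no genuine analytic obstacle, and the entire content of the corollary is already packaged into the joint convergence of Theorem~\ref{maintheorem} and the bijective reformulations of the lemma. The only points that demand care are bookkeeping: correctly matching the scaling $\sqrt{\alpha d/n}$ and the centering $n/d$ in each specialization, and keeping straight the exchange of $L^{\mr{in}}$ and $L^{\mr{de}}$ between the two cases.
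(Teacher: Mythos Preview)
Your proposal is correct and matches the paper's approach exactly: the paper simply states that the corollary follows ``by the above lemma and Theorem~\ref{maintheorem}'' without further argument, and your write-up is precisely the unpacking of that sentence. The bookkeeping (specializing $\alpha$, reading off the scaling $\sqrt{\alpha d/n}$, identifying $x_1^{\GBE{\beta}_0}$ with the largest eigenvalue, and transferring the limit via the exact equality of laws from the lemma) is all correct.
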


The $\alpha=1$ version of this corollary appears in \cite[Corollary 4]{Sniady}.

%
\section{Proof of Theorem \ref{maintheorem}} \label{sec:Proof}
%

\subsection{Step 1} 
\label{subsec:BOexpression}

The following explicit formula for $c_\lambda(\alpha)$ and $c'_{\lambda}(\alpha)$
appears in the proof of Lemma 3.5 in \cite{BorodinOlshanski}.

\begin{lem} \label{lem:ccExplicit}
For any $\alpha>0$ and $\lambda \in \mcal{P}_n(d)$, 
\begin{align*}
c_\lambda(\alpha)=& \alpha^n \prod_{1 \le i<j \le d} 
\frac{\Gamma(\lambda_i-\lambda_j +(j-i)/\alpha)}{\Gamma(\lambda_i-\lambda_j +(j-i+1)/\alpha)}
\cdot \prod_{i=1}^d \frac{\Gamma(\lambda_i+(d-i+1)/\alpha)}{\Gamma(1/\alpha)}, \\
c'_\lambda(\alpha)=& \alpha^n \prod_{1 \le i<j \le d} 
\frac{\Gamma(\lambda_i-\lambda_j +(j-i-1)/\alpha+1)}{\Gamma(\lambda_i-\lambda_j +(j-i)/\alpha+1)}
\cdot \prod_{i=1}^d \Gamma(\lambda_i+(d-i)/\alpha+1).
\end{align*}
\end{lem}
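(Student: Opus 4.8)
We must establish the two Gamma-function product formulas for $c_\lambda(\alpha)$ and $c'_\lambda(\alpha)$ stated in Lemma 3.5, starting from the combinatorial definitions
$$c_\lambda(\alpha)=\prod_{(i,j)\in\lambda}(\alpha(\lambda_i-j)+(\lambda'_j-i)+1),\qquad c'_\lambda(\alpha)=\prod_{(i,j)\in\lambda}(\alpha(\lambda_i-j)+(\lambda'_j-i)+\alpha).$$

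The key is to reorganize the product over all boxes $(i,j)\in\lambda$ into a product over the *rows* $i$, and then to rewrite the inner product over $j=1,\dots,\lambda_i$ using the telescoping identity for the Gamma function, $\prod_{m=0}^{k-1}(x+m)=\Gamma(x+k)/\Gamma(x)$.

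**The arm/leg reorganization.** First, I would factor out $\alpha$ from each box to write $c_\lambda(\alpha)=\alpha^n\prod_{(i,j)\in\lambda}\bigl(\lambda_i-j+(\lambda'_j-i+1)/\alpha\bigr)$, since there are exactly $n=|\lambda|$ boxes. The arm-length $\lambda_i-j$ is a nonnegative integer depending only on the row $i$ and the column $j$, while the factor $(\lambda'_j-i)/\alpha$ couples the row to the column through the conjugate partition. The plan is to fix the row $i$ and split the column index $j$ according to how $\lambda'_j$ compares to the rows below: for each $j$ in row $i$, the box $(i,j)$ satisfies $\lambda'_j\ge i$, and as $j$ ranges over $1,\dots,\lambda_i$ the value $\lambda'_j$ takes each of the values $d,d-1,\dots$ appropriately. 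Concretely, I would partition the columns of row $i$ into blocks on which $\lambda'_j$ is constant. Using $\ell(\lambda)\le d$, the distinct values of $\lambda'_j$ for $j\le\lambda_i$ are controlled by where the column lengths drop, i.e. by the gaps $\lambda_i-\lambda_j$ for $j>i$ together with the tail length $\lambda_i+ (d-i)/\alpha$-type term for the bottom contribution. On each such block the factors $\lambda_i-j+c$ are consecutive, so the block product telescopes into a ratio of two Gamma values, producing exactly the ratios $\Gamma(\lambda_i-\lambda_j+(j-i)/\alpha)/\Gamma(\lambda_i-\lambda_j+(j-i+1)/\alpha)$ for the interior columns and a single factor $\Gamma(\lambda_i+(d-i+1)/\alpha)/\Gamma(1/\alpha)$ for the remaining columns of row $i$. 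Summing (i.e. multiplying) over $i$ assembles the two displayed products.

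**Duality for the primed version.** Rather than repeating the whole computation for $c'_\lambda(\alpha)$, I would exploit the duality identity $c'_\lambda(\alpha)=\alpha^{|\lambda|}c_{\lambda'}(\alpha^{-1})$ already recorded in the excerpt. Applying the formula for $c$ with $\alpha$ replaced by $\alpha^{-1}$ and $\lambda$ by $\lambda'$, then using $\ell(\lambda')=\lambda_1$ and reindexing, should convert the product for $c_{\lambda'}(\alpha^{-1})$ into the claimed product for $c'_\lambda(\alpha)$; the $\alpha^{|\lambda|}$ prefactor reconciles the powers of $\alpha$, and the shifts by $+1$ versus $+\alpha$ in the two definitions match the shifts of arguments by $1$ in the Gamma factors. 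One bookkeeping subtlety: the duality sends a length-$d$ constraint on $\lambda$ to a width constraint on $\lambda'$, so one must check that the resulting formula is re-expressed back in terms of $d$ and the parts $\lambda_i$ of the \emph{original} $\lambda$, not its conjugate; this is a finite relabeling but must be done carefully.

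**Main obstacle.** The genuine work is the telescoping reorganization for $c_\lambda(\alpha)$: tracking precisely which consecutive blocks of columns in a fixed row $i$ correspond to which value of $\lambda'_j$, and verifying that the boundary Gamma-arguments fit together so that almost all factors cancel, leaving exactly the stated ratio over pairs $i<j$ plus the single product over $i$. The indexing where a column length $\lambda'_j$ drops as $j$ crosses $\lambda_{i'}$ for some $i'$ is the delicate combinatorial step, and the appearance of $/\alpha$ shifts (rather than integer shifts) means the Gamma-telescoping is along arithmetic progressions with non-integer common structure, so one cannot use the pure hook-length bijection directly; the identity $\prod_{m=0}^{k-1}(x+m)=\Gamma(x+k)/\Gamma(x)$ must be applied blockwise with carefully chosen $x$ and $k$. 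Once this row-by-row telescoping is set up correctly, the rest is routine, and $c'_\lambda$ follows by duality with only index bookkeeping.
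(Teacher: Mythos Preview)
Your plan for $c_\lambda(\alpha)$ is correct and essentially coincides with the paper's proof: factor out $\alpha^n$, group the boxes of row $i$ into maximal runs on which $\lambda'_j$ is constant, and telescope each run via $(x)_k=\Gamma(x+k)/\Gamma(x)$. The paper organizes the same computation by first fixing the value $r=\lambda'_j$ and then running $i=1,\dots,r$, which is the same double product viewed in the other order; the Pochhammer telescoping is identical to yours.

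The genuine gap is your treatment of $c'_\lambda(\alpha)$. You propose to use the duality $c'_\lambda(\alpha)=\alpha^{n}c_{\lambda'}(\alpha^{-1})$ and then ``reindex''. But applying the $c$-formula to $\lambda'$ with parameter $\alpha^{-1}$ yields
\[
c_{\lambda'}(\alpha^{-1})=\alpha^{-n}\prod_{1\le i<j\le \lambda_1}\frac{\Gamma(\lambda'_i-\lambda'_j+(j-i)\alpha)}{\Gamma(\lambda'_i-\lambda'_j+(j-i+1)\alpha)}\cdot\prod_{i=1}^{\lambda_1}\frac{\Gamma(\lambda'_i+(\lambda_1-i+1)\alpha)}{\Gamma(\alpha)},
\]
a product indexed by pairs $1\le i<j\le \lambda_1$ in the variables $\lambda'_1,\dots,\lambda'_{\lambda_1}$, with $\alpha$ appearing \emph{multiplicatively} in the shifts. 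The target formula for $c'_\lambda(\alpha)$ is a product over pairs $1\le i<j\le d$ in the variables $\lambda_1,\dots,\lambda_d$, with shifts of the form $(j-i)/\alpha$. Passing from one to the other is not a finite relabeling: the index sets have different sizes ($\lambda_1$ versus $d$), the roles of parts and multiplicities are swapped, and the $\alpha$-dependence of the Gamma arguments is inverted. Turning the first expression into the second would itself require a nontrivial identity of the same flavor as the lemma, so duality does not shortcut the work here. The paper avoids this entirely: it simply repeats the same row-block telescoping argument for $c'_\lambda(\alpha)$ directly (the ``Similarly for $c'_\lambda(\alpha)$''), which goes through with the obvious shift $+1\mapsto+\alpha$ in each hook factor. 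You should do the same.
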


\begin{proof}
For each $i \ge 1$, let  $m_i'=m_i(\lambda')$ be the multiplicity of $i$ 
in $\lambda'=(\lambda_1',\lambda_2',\dots)$. 
Then one observes
$$
\prod_{i=1}^{r} \prod_{j: \lambda_j'=r}  (\lambda_i-j +(\lambda_j'-i+1)/\alpha) 
= \prod_{i=1}^r 
\prod_{p=1}^{m_r'}  (m_i'+m_{i+1}' + \cdots +m_{r-1}' +p-1 +(r-i+1)/\alpha )
$$
for each $1 \le r \le d$.
Since $m_i'=\lambda_i-\lambda_{i+1}$, we have 
\begin{align*}
c_{\lambda}(\alpha)=& 
\alpha^n \prod_{(i,j) \in \lambda}  (\lambda_i-j +(\lambda_j'-i+1)/\alpha) \\
=&\alpha^n \prod_{r = 1}^d
\prod_{i=1}^r 
\prod_{p=1}^{\lambda_r-\lambda_{r+1} } (\lambda_i-\lambda_{r} +p-1 +(r-i+1)/\alpha ) \\
=& \alpha^n \prod_{i=1}^d \prod_{r=i}^d 
\frac{((r-i+1)/\alpha)_{\lambda_i-\lambda_{r+1} } }{((r-i+1)/\alpha)_{\lambda_i-\lambda_{r}}}.
\end{align*}
Here $(a)_k=\Gamma(a+k)/\Gamma(a)$ is the Pochhammer symbol.
We moreover see  that
\begin{align*}
\alpha^{-n} c_{\lambda}(\alpha)=&  
\prod_{i=1}^d \left[
\frac{(1/\alpha)_{\lambda_i-\lambda_{i+1}}}{1} 
\frac{(2/\alpha)_{\lambda_i-\lambda_{i+2}}}{(2/\alpha)_{\lambda_i-\lambda_{i+1}}}
\cdots \frac{((d-i+1)/\alpha)_{\lambda_i-\lambda_{d+1}}}{((d-i+1)/\alpha)_{\lambda_i-\lambda_{d}}}
\right] \\
=&\prod_{1 \le i<j \le d} 
\frac{((j-i)/\alpha)_{\lambda_i-\lambda_{j}}}{((j-i+1)/\alpha)_{\lambda_i-\lambda_{j}}}
\cdot \prod_{i=1}^d ((d-i+1)/\alpha)_{\lambda_i}.
\end{align*}
Now the first product equals
\begin{align*}
&
\prod_{1 \le i<j \le d} \frac{\Gamma(\lambda_i-\lambda_j +(j-i)/\alpha)}
{\Gamma((j-i)/\alpha)}
\frac{\Gamma((j-i+1)/\alpha)}{\Gamma(\lambda_i-\lambda_j +(j-i+1)/\alpha)} \\
=& \prod_{1 \le i<j \le d} \frac{\Gamma(\lambda_i-\lambda_j +(j-i)/\alpha)}
{\Gamma(\lambda_i-\lambda_j +(j-i+1)/\alpha)}
\cdot \prod_{i=1}^{d-1} \frac{\Gamma((d-i+1)/\alpha)}{\Gamma(1/\alpha)},
\end{align*}
and the second product equals
$$
\prod_{i=1}^d \frac{\Gamma(\lambda_i +(d-i+1)/\alpha)}{\Gamma((d-i+1)/\alpha)}.
$$
Thus we obtain the desired expression for $c_\lambda(\alpha)$.
Similarly for $c'_\lambda(\alpha)$.
\end{proof}

\subsection{Step 2} \label{subsec:Sniady}

The discussion in this subsection is a slight generalization of the one in \cite{Sniady}.

We put
$$
\xi_r^{(n)} = \frac{r-\frac{n}{d}}{\sqrt{\frac{n}{d}}}
$$
for each $r \in \bZ$.
For any positive real number $\theta >0$, 
we define the function $\phi_{n;\theta}: \bR \to \bR$ which is constant on the interval
of the form $I_r^{(n)}=[\xi^{(n)}_r, \xi^{(n)}_{r+1})$ for each integer $r$, and 
such that
$$
\phi_{n;\theta}(\xi_r^{(n)})  =
\begin{cases}
\frac{1}{  {_1}F_1(1;\theta;\frac{n}{d})}
\frac{\(\frac{n}{d}\)^{r+\frac{1}{2}}}{(\theta)_r} &
\text{if $r$ is non-negative}, \\
0 & \text{if $r$ is negative}. 
\end{cases}
$$
Here ${_1}F_{1}(a;b;x)= \sum_{r=0}^\infty \frac{(a)_r}{(b)_r} \frac{x^r}{r!}$
is the hypergeometric function of type $(1,1)$.
The following asymptotics follows from \cite[Corollary 4.2.3]{AAR}:
\begin{equation} \label{eq:AsymptoticHypergeometric}
{_1}F_1\(1;\theta;\frac{n}{d}\) \sim
\frac{e^{\frac{n}{d}} \Gamma(\theta)}{\(\frac{n}{d}\)^{\theta-1}}
\qquad \text{as $n \to \infty$}. 
\end{equation}

The function $\phi_{n;\theta}$ is a probability density function on $\bR$.
Indeed, since $\bR= \bigsqcup_{r \in \bZ} I_r^{(n)}$ and 
since the volume of each $I_r^{(n)}$ is $\xi_{r+1}^{(n)}-\xi_{r}^{(n)}=\sqrt{\frac{d}{n}}$,
we have
$$
\int_{\bR} \phi_{n;\theta}(y) d y = \sum_{r=0}^ \infty \sqrt{\frac{d}{n}} 
\phi_{n;\theta} (\xi_r^{(n)})
= \frac{1}{{_1}F_1(1;\theta;\frac{n}{d})}
\sum_{r=0}^\infty 
\frac{\(\frac{n}{d}\)^r}{(\theta)_r} =1.
$$
We often need the equation
\begin{equation} \label{eq:factorialPhi}
\frac{1}{\Gamma(r+\theta)} = 
\frac{{_1}F_1(1;\theta;\frac{n}{d})}
{\(\frac{n}{d}\)^{r+\frac{1}{2}} \Gamma(\theta)}
\phi_{n;\theta} ( \xi_r^{(n)} ) 
\sim
 \frac{e^{\frac{n}{d}}}{\(\frac{n}{d}\)^{r+\theta-\frac{1}{2}} } \phi_{n;\theta}(\xi^{(n)}_r),
\end{equation}
as $n \to \infty$,
for any fixed $\theta>0$ and a non-negative integer $r$.
Here we have used \eqref{eq:AsymptoticHypergeometric}.

The following lemma generalizes  \cite[Lemma 5]{Sniady}  slightly.

\begin{lem}
For any $\theta>0$ and $y \in \bR$, we have
\begin{equation} \label{eq:LimitPhi}
\lim_{n \to \infty} \phi_{n;\theta}(y) = \frac{1}{\sqrt{2\pi}} e^{-\frac{y^2}{2}}.
\end{equation}
Furthermore, there exists a constant $C=C_\theta$ such that
\begin{equation} \label{eq:IneqConstant}
\phi_{n;\theta}(y)  < C e^{-|y|}
\end{equation}
holds true for all $n$ and $y$.
\end{lem}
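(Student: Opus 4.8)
The plan is to work entirely with the value of $\phi_{n;\theta}$ on each step interval and reduce both assertions to an analysis of the Gamma function. Write $m:=n/d$. For a fixed $y\in\bR$ let $r=r(n)$ be the unique integer with $y\in I_r^{(n)}$, so that $\phi_{n;\theta}(y)=\phi_{n;\theta}(\xi_r^{(n)})$. Using $(\theta)_r=\Gamma(r+\theta)/\Gamma(\theta)$ and the asymptotics \eqref{eq:AsymptoticHypergeometric} (equivalently \eqref{eq:factorialPhi}), one has
\[
\phi_{n;\theta}(\xi_r^{(n)})=\frac{m^{r+1/2}\,\Gamma(\theta)}{{_1}F_1(1;\theta;m)\,\Gamma(r+\theta)}\sim\frac{m^{r+\theta-1/2}}{e^{m}\,\Gamma(r+\theta)}\qquad(n\to\infty).
\]
Everything then reduces to estimating the right-hand side, for which I would use Stirling's formula for $\Gamma(r+\theta)$.

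For the pointwise limit \eqref{eq:LimitPhi}, set $a:=r+\theta$ and note that $r=\lfloor m+\sqrt m\,y\rfloor$ gives $a=m+\sqrt m\,y+O(1)\to\infty$. Writing $a=m(1+t)$, so that $\sqrt m\,t\to y$, Stirling turns the exponent into $m\,\ell(t)+\tfrac12\log(1+t)$, where $\ell(t):=t-(1+t)\log(1+t)$. Since $\ell(t)=-\tfrac{t^2}{2}+O(t^3)$ near $0$, one gets $m\,\ell(t)\to-\tfrac{y^2}{2}$ and $\tfrac12\log(1+t)\to0$, whence $\phi_{n;\theta}(y)\to\frac{1}{\sqrt{2\pi}}e^{-y^2/2}$. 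The key cancellation is that the $\sqrt m$-order parts of the two terms $(a-\tfrac12)\log(m/a)$ and $(a-m)$ cancel, leaving only the quadratic Gaussian term; this makes the limit insensitive to the $O(1)$ ambiguity coming from the floor function.

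For the uniform estimate \eqref{eq:IneqConstant}, which is the main point, I would first observe that it suffices to treat large $n$: for each fixed $n$ the function $\phi_{n;\theta}(y)e^{|y|}$ vanishes for $y<-\sqrt m$ and tends to $0$ as $y\to+\infty$ (because the ratio $\phi_{n;\theta}(\xi_{r+1}^{(n)})/\phi_{n;\theta}(\xi_r^{(n)})=m/(r+\theta)$ forces super-exponential decay, beating the geometric growth of $e^{|\xi_r^{(n)}|}$), so its supremum is finite and the finitely many small $n$ can be absorbed into $C$. For large $n$, \eqref{eq:AsymptoticHypergeometric} gives a uniform lower bound ${_1}F_1(1;\theta;m)\ge\tfrac12 e^{m}\Gamma(\theta)m^{1-\theta}$, and the non-asymptotic Stirling bound $\Gamma(a)\ge\sqrt{2\pi}\,a^{a-1/2}e^{-a}$ (valid for all $a>0$) then yields $\phi_{n;\theta}(\xi_r^{(n)})\le\frac{2}{\sqrt{2\pi}}\exp\!\big(m\,\ell(t)+\tfrac12\log(1+t)\big)$ with $a=m(1+t)$.

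It remains to show $m\,\ell(t)+\tfrac12\log(1+t)+|\xi_r^{(n)}|\le C$ uniformly, where $|\xi_r^{(n)}|=|r-m|/\sqrt m\le\sqrt m\,|t|+1$. The analytic input is the elementary inequality $\ell(t)\le-\frac{t^2}{2(1+t_+)}$ for $t>-1$ (with $t_+=\max(t,0)$), proved from $\ell''(t)=-(1+t)^{-1}$ and Taylor's theorem at $0$. Setting $v:=\sqrt m\,t$ and splitting into the regimes $t\le0$, $0<t\le1$, and $t>1$, one checks that $m\,\ell(t)+\sqrt m\,|t|$ is bounded above in each: on the bulk it is controlled by $-\tfrac{v^2}{2}+|v|\le\tfrac12$, while for $t>1$ the decay $m\,\ell(t)\le-\tfrac{mt}{4}$ dominates both $\sqrt m\,t$ and $\tfrac12\log(1+t)$ once $\sqrt m$ is large. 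Finally, converting $e^{-|\xi_r^{(n)}|}$ to $e^{-|y|}$ costs only a bounded factor, since $|y|\le|\xi_r^{(n)}|+m^{-1/2}$ on $I_r^{(n)}$. The hard part is precisely this last uniform analysis: one must control $\phi_{n;\theta}$ simultaneously across all $n$ and all intervals, in the Gaussian bulk and deep in the tails, and the sign and quadratic decay of $\ell$ together with the non-asymptotic Stirling bound are what make the two competing exponential factors balance.
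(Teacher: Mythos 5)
Your proof is correct, but for the bound \eqref{eq:IneqConstant} it takes a genuinely different route from the paper's. For the limit \eqref{eq:LimitPhi} the two arguments essentially coincide: both start from the exact identity $\phi_{n;\theta}(\xi_r^{(n)})=m^{r+1/2}\Gamma(\theta)/\bigl({_1}F_1(1;\theta;m)\,\Gamma(r+\theta)\bigr)$ with $m=n/d$, replace the ${_1}F_1$ factor using \eqref{eq:AsymptoticHypergeometric}, and finish with Stirling; the paper first trades $\Gamma(r+\theta)$ for $\Gamma(r)\,r^{\theta}$ so as to reduce to $\theta=1$ and $\log r!$, whereas you apply Stirling to $\Gamma(r+\theta)$ directly --- a cosmetic difference. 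For \eqref{eq:IneqConstant}, the paper argues by discrete monotonicity: the logarithmic increment of $\phi_{n;\theta}$ across one step equals $g_\theta(\xi_r^{(n)},\sqrt{m})$, where $g_\theta(y,a)=-a\log\bigl(1+y/a+(\theta-1)/a^2\bigr)$, and this increment is shown to be $>1$ to the left of a fixed $D_1<0$ and $<-1$ to the right of a fixed $D_2>0$, uniformly in $n$; hence $\phi_{n;\theta}(y)e^{|y|}$ decreases as one moves outward along either tail, and the compact window $[D_1,D_2]$ is handled by appealing to the convergence \eqref{eq:LimitPhi}. You instead prove a single global quantitative estimate, $\phi_{n;\theta}\le\frac{2}{\sqrt{2\pi}}\exp\bigl(m\,\ell(t)+\tfrac12\log(1+t)\bigr)$ with $\ell(t)=t-(1+t)\log(1+t)$, using the non-asymptotic Stirling inequality $\Gamma(a)\ge\sqrt{2\pi}\,a^{a-1/2}e^{-a}$, and then beat the factor $e^{|y|}$ by the elementary inequality $\ell(t)\le -t^2/\bigl(2(1+t_+)\bigr)$ in the three regimes. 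Your version costs one extra (standard) input, the non-asymptotic Stirling bound, plus the easy reduction to large $n$; what it buys is a bound that is fully explicit, actually sub-Gaussian in the bulk, and nowhere dependent on the pointwise limit. That last point is worth noting: the paper's handling of the window $[D_1,D_2]$ infers a bound uniform in $n$ from pointwise convergence of step functions alone, which strictly speaking requires an additional observation (pointwise convergence does not by itself preclude spikes of the step functions between the sampled points, though here it can be repaired using the unimodality of $r\mapsto\phi_{n;\theta}(\xi_r^{(n)})$); your argument never faces this issue.
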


\begin{proof}
Fix $y \in \bR$.
Let $c= \frac{n}{d}$ and let $r=r(y,c)$ be an integer such that $y \in I_r^{(n)}$,
i.e., $r=\lfloor c+y\sqrt{c} \rfloor$.
We may suppose that $r$ is positive
because $r$ is large when $n$ is large. 
By \eqref{eq:factorialPhi} and the asymptotics 
\begin{equation} \label{eq:GammaAsymptotic}
\Gamma(r+\theta) \sim \Gamma(r) r^\theta \qquad \text{for $\theta$ fixed and as $r \to \infty$}, 
\end{equation}
we see that
$$
\phi_{n;\theta}(y) 
\sim c^{\theta-1}
\frac{e^{-c} c^{r+\frac{1}{2}}}{\Gamma(\theta+r)}
\sim \(\frac{c}{r}\)^{\theta-1}
\frac{e^{-c} c^{r+\frac{1}{2}} }{r!}
=\(\frac{c}{r}\)^{\theta-1} \phi_{n;1}(y) \sim \phi_{n;1}(y)
$$
as $n \to \infty$ (so $c \to \infty$).
Therefore we may assume $\theta=1$ in order to prove \eqref{eq:LimitPhi}.
Using Stirling's formula $\log r! =\(r+\frac{1}{2}\) \log r - r + \frac{\log 2\pi}{2} +O(r^{-1})$,
we have
\begin{align*}
& \log  \phi_{n;1}(y) =\log  \phi_{n;1}(\xi_{r}^{(n)}) = 
\(r+\frac{1}{2} \) \log c -c  - \log r! \\
=& -\(c+\xi_r^{(n)} 
\sqrt{c}+\frac{1}{2} \) \log\(1+ \frac{\xi_r^{(n)}}{\sqrt{c}}\) +
\xi_r^{(n)}\sqrt{c} -\frac{\log 2\pi}{2}
+O(c^{-1}) \\
=& -\(c+\xi_r^{(n)}\sqrt{c}+\frac{1}{2} \) 
\( \frac{\xi_r^{(n)}}{\sqrt{c}} - \frac{(\xi_r^{(n)})^2}{2c} +O(c^{-\frac{3}{2}})\) 
+\xi_r^{(n)}\sqrt{c} -\frac{\log 2\pi}{2} +O(c^{-1}) \\
=& -\frac{(\xi_r^{(n)})^2}{2} -\frac{\log 2\pi}{2} +O(c^{-\frac{1}{2}})
\end{align*}
as $c \to \infty$.
Since $y =\xi_{r}^{(n)}+O(c^{-\frac{1}{2}})$, we obtain \eqref{eq:LimitPhi}.

In order to prove \eqref{eq:IneqConstant},
we consider the function
$$
g_\theta(y,a)= -a \log \(1+\frac{y}{a}+\frac{\theta-1}{a^2}  \)
$$
for $(y,a) \in \bR \times \bR_{>0}$.
Then, for each positive integer $r$,
$$
\frac{\log \phi_{n;\theta} (\xi_{r}^{(n)})- \log \phi_{n;\theta}(\xi_{r-1}^{(n)})}
{\xi_{r}^{(n)}-\xi_{r-1}^{(n)}}
=-  \sqrt{\frac{n}{d}} \log 
\(1+\frac{\xi_{r}^{(n)}}{\sqrt{\frac{n}{d}}}+\frac{\theta-1}{\frac{n}{d}}  \) 
= g_\theta \(\xi_r^{(n)},\sqrt{\frac{n}{d}}\).
$$
It is easy to see that
$g_\theta(y,a) >1 \ \Longleftrightarrow \ y < a (e^{-1/a}-1) -\frac{\theta-1}{a}$.
Take a negative number $D_1$ such that $D_1 <-1+\min \{0, -(\theta-1)\sqrt{d}\}$. 
Since $a (e^{-1/a}-1) >-1$ for all $a >0$, 
if $\xi_r^{(n)} <D_1$, 
then we have $\xi_r^{(n)} <  \sqrt{\frac{n}{d}} (e^{-\sqrt{\frac{d}{n}}}-1) 
-\frac{\theta-1}{\sqrt{\frac{n}{d}}}$,
which is equivalent to $g_\theta \(\xi_r^{(n)},\sqrt{\frac{n}{d}}\) >1$.
Therefore
\begin{equation} \label{eq:Inequality1}
\phi_{n;\theta}(\xi_{r-1}^{(n)}) e^{-\xi_{r-1}^{(n)}} < 
 \phi_{n;\theta}(\xi_{r}^{(n)}) e^{-\xi_{r}^{(n)}}.
\end{equation}

Similarly, we see that $g_\theta(y,a) <-1 \Longleftrightarrow \
  y > a(e^{1/a}-1)- \frac{\theta-1}{a}$.
Since the function $a(e^{1/a}-1)$ in $a$ is monotonically decreasing on $(0,\infty)$
and $\lim_{a \to +0} a(e^{1/a}-1)=+\infty$,
we can take a large positive constant $D_2'$ such that $\sqrt{\frac{n}{d}} (e^{\sqrt{\frac{d}{n}}}-1)<D_2'$
for all $n$.
Take a positive number $D_2$ such that $D_2>D_2' + \max\{0,-(\theta-1)\sqrt{d}\}$.
Then, if $\xi_{r}^{(n)} >D_2$, 
we have $\xi_{r}^{(n)}> \sqrt{\frac{n}{d}}(e^{\sqrt{\frac{d}{n}}}-1)-\frac{\theta-1}{\sqrt{\frac{n}{d}}}$ 
for any $n$,
which is equivalent to $ g_\theta\(\xi_{r}^{(n)},\sqrt{\frac{n}{d}}\) <-1$.
Therefore
\begin{equation} \label{eq:Inequality2}
\phi_{n;\theta}(\xi_{r}^{(n)}) e^{\xi_{r}^{(n)}} < 
\phi_{n;\theta}(\xi_{r-1}^{(n)}) e^{\xi_{r-1}^{(n)}}.
\end{equation}

The equation \eqref{eq:Inequality1} implies that
there exists a constant $C_1$ such that 
$\phi_{n;\theta}(y)e^{|y|} < C_1 $ for $y$ sufficiently smaller than $D_1$.
Similarly, the equation \eqref{eq:Inequality2} implies that
there exists a constant $C_2$ such that 
$\phi_{n;\theta}(y)  e^{|y|}< C_2$ for $y$ sufficiently bigger than $D_2$.
When $y$ belongs to a neighborhood of the interval $[ D_1,D_2 ] $,
the inequality \eqref{eq:IneqConstant} holds from the convergence \eqref{eq:LimitPhi}. 
Thus, there exists a constant $C$ such that $\phi_{n;\theta}(y) <C e^{-|y|}$ for all $y$.
\end{proof}

\subsection{Step 3}

By Lemma \ref{lem:ccExplicit} we have
$$
\frac{1}{c_\lambda(\alpha)c'_\lambda(\alpha)} 
= \alpha^{-2n} \Gamma(1/\alpha)^d 
\prod_{i=1}^4 F_i(\lambda_1,\dots,\lambda_d)
$$
where the functions $F_i$ are defined by
\begin{align*}
F_1(r_1,\dots,r_d)=& \prod_{1 \le i<j \le d} (r_i-r_j+(j-i)/\alpha),  \\
F_2(r_1,\dots,r_d)=& \prod_{1 \le i<j \le d} 
\frac{\Gamma(r_i-r_j+(j-i+1)/\alpha)}{\Gamma(r_i-r_j+(j-i-1)/\alpha+1)}, \\
F_3(r_1,\dots,r_d) =& \prod_{i=1}^d \frac{1}{\Gamma(r_i+(d-i+1)/\alpha)}, \\
F_4(r_1,\dots,r_d) =& \prod_{i=1}^d \frac{1}{\Gamma(r_i+(d-i)/\alpha+1)},
\end{align*}
for real numbers $r_1 \ge \cdots \ge r_d \ge 0$.

\begin{lem} \label{lem:DenstiyAsymptotic1}
Let $(y_1,\dots,y_d) \in \mf{H}_d$ be real numbers  such that $y_1>y_2>\cdots>y_d$,
and let $r_i= \frac{n}{d}+y_i \sqrt{\frac{n}{d}}$ for $1 \le i \le d$.
Then as $n \to \infty$
\begin{align*}
F_1(r_1,\dots,r_d) \sim & \( \frac{n}{d} \)^{\frac{d(d-1)}{4}} \prod_{1 \le i<j \le d}
(y_i-y_j), \\
F_2(r_1,\dots,r_d) \sim & \( \frac{n}{d} \)^{\frac{d(d-1)}{4} (\frac{2}{\alpha}-1)} 
\prod_{1 \le i<j \le d}
(y_i-y_j)^{\frac{2}{\alpha}-1}, \\
F_3(r_1,\dots,r_d) \sim & \frac{e^{n}}{
\( \frac{n}{d} \)^{n+\frac{d(d+1)}{2\alpha} -\frac{d}{2}} }
\prod_{i=1}^d \frac{e^{-\frac{y_i^2}{2}}}{\sqrt{2\pi}},\\ 
F_4(r_1,\dots,r_d) \sim & \frac{e^{n}}{
\( \frac{n}{d} \)^{n+\frac{d(d-1)}{2\alpha} +\frac{d}{2}}}
\prod_{i=1}^d \frac{e^{-\frac{y_i^2}{2}}}{\sqrt{2\pi}}.
\end{align*}
Moreover, there exists a function $P$ in $d$ variables such that
\begin{equation} \label{eq:Dominated}
\frac{\(\frac{n}{d}\)^{2n+\frac{d+d^2}{2\alpha} }}{e^{2n}}
\prod_{i=1}^4 F_i(r_1,\dots, r_d) < P(y_1,\dots, y_d) \prod_{i=1}^d e^{-2|y_i|}
\end{equation}
for all $y_1,\dots,y_d$ and $n$, and such that
\begin{equation} \label{eq:orderP}
P(y_1,\dots,y_d)=O(|y_1|^{k_1} \cdots |y_d|^{k_d}) \qquad 
\text{as $|y_1|,\dots,|y_d| \to \infty$}
\end{equation}
with some positive real numbers $k_1,\dots, k_d$.
\end{lem}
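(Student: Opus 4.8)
The plan is to treat the four factors separately, since $F_1$ and $F_2$ are governed by elementary large-argument asymptotics of products and of ratios of Gamma functions, while $F_3$ and $F_4$ are exactly where the density $\phi_{n;\theta}$ constructed in Step 2 enters. Throughout I will use that $r_i-r_j=(y_i-y_j)\sqrt{n/d}\to+\infty$ for $i<j$ (this is where the strict ordering $y_1>\dots>y_d$ is needed) and that the traceless condition $(y_1,\dots,y_d)\in\mf{H}_d$ forces $\sum_{i=1}^d y_i=0$; the latter is what makes the exponents of $n/d$ come out clean.

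For $F_1$ I would factor $(y_i-y_j)\sqrt{n/d}$ out of each linear factor $r_i-r_j+(j-i)/\alpha$, so that $F_1\sim (n/d)^{d(d-1)/4}\prod_{i<j}(y_i-y_j)$, the corrections $(j-i)/(\alpha\sqrt{n/d})$ being negligible. For $F_2$ I would apply the asymptotic $\Gamma(x+a)/\Gamma(x+b)\sim x^{a-b}$, a consequence of \eqref{eq:GammaAsymptotic}, to each factor with $x=r_i-r_j$, $a=(j-i+1)/\alpha$ and $b=(j-i-1)/\alpha+1$, for which $a-b=2/\alpha-1$; taking the product and extracting the powers of $\sqrt{n/d}$ yields the stated expression with exponent $\tfrac{d(d-1)}{4}(2/\alpha-1)$. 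The two remaining factors are handled with the tools of Step 2: writing $\theta_i=(d-i+1)/\alpha$ for $F_3$ and $\theta_i=(d-i)/\alpha+1$ for $F_4$, each Gamma reciprocal is rewritten through \eqref{eq:factorialPhi} as a multiple of $\phi_{n;\theta_i}(y_i)$, and \eqref{eq:LimitPhi} replaces each $\phi$ factor by $\tfrac{1}{\sqrt{2\pi}}e^{-y_i^2/2}$. Taking the product over $i$, the exponent of $n/d$ is $\sum_i(r_i+\theta_i-\tfrac12)$; here $\sum_i r_i=n$ by $\sum_i y_i=0$, while $\sum_i\theta_i$ evaluates to $\tfrac{d(d+1)}{2\alpha}$ for $F_3$ and $\tfrac{d(d-1)}{2\alpha}+d$ for $F_4$, producing exactly the claimed exponents together with the factor $e^{n}$ coming from $\prod_i e^{n/d}$.

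For the dominated bound \eqref{eq:Dominated} I would argue uniformly in $n$ rather than through the asymptotic forms. For $F_3F_4$ I would use the \emph{exact} first equality in \eqref{eq:factorialPhi} to write each as a product of one $\phi$ factor per index, times the scalars ${_1}F_1(1;\theta_i;n/d)/\Gamma(\theta_i)$ and the exact powers $(n/d)^{-(r_i+1/2)}$; since $\sum_i r_i=n$ these powers combine to $(n/d)^{-(2n+d)}$. The uniform Gaussian-type estimate \eqref{eq:IneqConstant} then dominates each pair of $\phi$ factors by $C^2 e^{-2|y_i|}$, which supplies the factor $\prod_i e^{-2|y_i|}$. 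The residual $n$-dependent scalar — the product of the ${_1}F_1$ constants, the powers of $n/d$ from $F_1$, $F_2$ and from the $\phi$-identities, and the prefactor in \eqref{eq:Dominated} — is shown to stay bounded uniformly in $n$ by \eqref{eq:AsymptoticHypergeometric}, which is precisely where the $e^{2n}$ and the surplus powers of $n/d$ cancel (a short bookkeeping check confirms the total $n/d$-exponent is $-2n-(d+d^2)/(2\alpha)$). Finally, after the powers of $\sqrt{n/d}$ are extracted, $F_1$ and $F_2$ are bounded by polynomials in the $|y_i|$ uniformly in $n$, using $\Gamma(x+a)/\Gamma(x+b)\le C(1+x)^{a-b}$ for $F_2$; this yields the polynomial $P$ with the growth \eqref{eq:orderP}.

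The main obstacle is precisely this uniformity: keeping all the $n$-dependent constants from the exact $\phi$-identity and from the hypergeometric asymptotics under simultaneous control, and handling the factor $(y_i-y_j)^{2/\alpha-1}$ in $F_2$ when $2/\alpha<1$. At a coincidence $y_i=y_j$ the true value of $F_2$ is finite while that factor is $+\infty$, so the inequality \eqref{eq:Dominated} holds trivially there; and since $2/\alpha-1>-1$ the resulting singularity of $P$ remains integrable against $\prod_i e^{-2|y_i|}$, which is all that the later dominated-convergence argument requires.
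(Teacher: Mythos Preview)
Your treatment of the four asymptotic statements matches the paper's exactly: factor $\sqrt{n/d}$ out of each difference for $F_1$, apply \eqref{eq:GammaAsymptotic} to each Gamma ratio for $F_2$, and for $F_3,F_4$ rewrite each $1/\Gamma(r_i+\theta_i)$ through \eqref{eq:factorialPhi} and pass to the limit via \eqref{eq:LimitPhi}. Your handling of the uniform bound on $F_3F_4$ via the exact identity in \eqref{eq:factorialPhi} together with \eqref{eq:IneqConstant} is also what the paper does.

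The one substantive divergence is in the uniform bound on $F_1F_2$. You bound $F_1$ and $F_2$ separately, using $\Gamma(x+a)/\Gamma(x+b)\le C(1+x)^{a-b}$ for each factor of $F_2$; after extracting the power of $n/d$ this forces a factor $\prod_{i<j}(y_i-y_j)^{2/\alpha-1}$ into $P$ whenever $\alpha>2$. You then argue that integrability of this singularity is all that is needed downstream. That is not quite enough: in Step~4 the dominated-convergence argument is run on a \emph{Riemann sum} over the lattice $\lambda\in\mcal{P}_n(d)$, and that lattice contains partitions with repeated parts $\lambda_i=\lambda_{i+1}$, i.e.\ points with $\xi_{\lambda_i}^{(n)}=\xi_{\lambda_{i+1}}^{(n)}$. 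At such points your $P$ equals $+\infty$, so the dominating Riemann sum \eqref{eq:RiemannSum} is useless. Integrability of $P$ against Lebesgue measure does not control a Riemann sum that samples the singular set.

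The paper avoids this by \emph{not} separating $F_1$ and $F_2$: for each pair $i<j$ it bounds the combined factor
\[
(r_i-r_j+(j-i)/\alpha)\,\frac{\Gamma(r_i-r_j+(j-i+1)/\alpha)}{\Gamma(r_i-r_j+(j-i-1)/\alpha+1)}
\ <\ c_{ij}(r_i-r_j)^{2/\alpha}+d_{ij},
\]
which is finite at $r_i=r_j$ because the linear factor from $F_1$ exactly compensates the potentially negative exponent coming from $F_2$. After scaling, this produces $F_1F_2\le (n/d)^{d(d-1)/(2\alpha)}P'(y)$ with $P'(y)=\prod_{i<j}\bigl(c_{ij}(y_i-y_j)^{2/\alpha}+\tilde d_{ij}\bigr)$ everywhere finite and of polynomial growth, so the Riemann-sum domination in Step~4 goes through for all $\alpha>0$. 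Replacing your separate bounds on $F_1$ and $F_2$ by this combined estimate closes the gap.
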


\begin{proof}
It is immediate to see that 
\begin{equation} \label{Asym:F1}
F_1(r_1,\dots,r_d) = \(\frac{n}{d}\)^{\frac{d(d-1)}{4}}
\prod_{1 \le i< j \le d} \(y_i-y_j + \sqrt{\frac{d}{n}}(j-i)/\alpha \)
\end{equation}
so that the desired asymptotics for $F_1$ follows.
The asymptotics for $F_2$ also follows by \eqref{eq:GammaAsymptotic}.

Using \eqref{eq:factorialPhi}, we have
$$
F_3(r_1,\dots, r_d) \sim \prod_{i=1}^d \frac{e^{\frac{n}{d}}}
{\(\frac{n}{d}\)^{r_i+(d-i+1)/\alpha-1/2}}
\phi_{n;(d-i+1)/\alpha} (y_i)
= \frac{e^n}{\(\frac{n}{d}\)^{n+\frac{d(d+1)}{2\alpha} -\frac{d}{2}}}
\prod_{i=1}^d \phi_{n;(d-i+1)/\alpha} (y_i).
$$
The desired asymptotics for $F_3$  follows from \eqref{eq:LimitPhi}.
Similarly for $F_4$.

Observe that there exist positive constants $c_{ij}$ and  $d_{ij}$ such that
$$
(r_i-r_j +(j-i)/\alpha) \frac{\Gamma(r_i-r_j+(j-i+1)/\alpha)}{\Gamma(r_i-r_j+(j-i-1)/\alpha+1)}
< c_{ij} (r_i-r_j)^{2/\alpha} +d_{ij}
$$
for any $r_1 \ge \cdots \ge r_d$.
This implies that 
there exists a function $P'$ in $d$ variables such that
\begin{equation} \label{eq:Dominate1}
F_1(r_1,\dots,r_d) F_2(r_1,\dots,r_d) < \(\frac{n}{d}\)^{\frac{d(d-1)}{2\alpha}}
P'(y_1,\dots,y_d)
\end{equation}
for all $n$ and satisfying the asymptotics given in \eqref{eq:orderP}.
On the other hand, by \eqref{eq:factorialPhi},
we have
$$
F_3(r_1,\dots,r_d) F_4(r_1,\dots,r_d) < 
C'' \frac{e^{2n}}{\(\frac{n}{d}\)^{2n+\frac{d^2}{\alpha}}} \prod_{i=1}^d 
\phi_{n;(d-i+1)/\alpha}(y_i) \phi_{n;(d-i)/\alpha+1}(y_i)
$$
with some constant $C''$.
Therefore \eqref{eq:Dominated} follows from \eqref{eq:IneqConstant}.
\end{proof}

We extend $\frac{1}{c_\lambda(\alpha)c'_\lambda(\alpha)}$ as follows.
For all real numbers $r_1,\dots,r_d \in \bR$ satisfying $r_1 +\cdots+r_d=n$, we put
$$
\widehat{c}^{(\alpha)}(r_1,\dots,r_d)= 
\begin{cases}
\alpha^{-2n} \Gamma(1/\alpha)^d \prod_{i=1}^4 F_i(r_1,\dots,r_d) &
\text{if $r_1 \ge \cdots \ge r_d \ge 0$},\\
0 & \text{otherwise}.
\end{cases}
$$
It follows by the above discussions that, 
for each $(y_1,\dots,y_d) \in \mf{H}_d$,
putting $r_i =\frac{n}{d}+y_i \sqrt{\frac{n}{d}} \ (1 \le i \le d)$,
\begin{equation} \label{eq:DensityAsymptotic} 
\lim_{n \to \infty}
\frac{\alpha^{2n} (2\pi)^d \(\frac{n}{d}\)^{2n+\frac{d+d^2}{2\alpha} }}{\Gamma(1/\alpha)^d e^{2n}}
\widehat{c}^{(\alpha)}(r_1,\dots,r_d)
= e^{-y_1^2 -\cdots -y_d^2}
\prod_{1 \le i<j \le d} (y_i - y_j)^{2/\alpha}.
\end{equation}
Also, there exists a function $P$ satisfying \eqref{eq:orderP} such that 
\begin{equation} \label{eq:DensityDominated} 
 \frac{\alpha^{2n} \(\frac{n}{d}\)^{2n+\frac{d+d^2}{2\alpha} }}{\Gamma(1/\alpha)^d e^{2n}}
\widehat{c}^{(\alpha)}(r_1,\dots,r_d)
< 
e^{-2(|y_1| +\cdots+ |y_d|)} 
P(y_1,\dots,y_d)
\end{equation}
for all $n$ and $(y_1,\dots,y_d) \in \mf{H}_d$.

\subsection{Step 4}

\begin{lem} \label{lem:SumLimit}
$$
\lim_{n \to \infty} C_{n,d}(\alpha) 
\frac{\alpha^{2n} (2\pi)^d \(\frac{n}{d}\)^{2n+\frac{d^2+d}{2\alpha}-\frac{d-1}{2}}}
{ \Gamma(1/\alpha)^d e^{2n}}
= \alpha^{-\frac{d(d-1)}{2\alpha}-\frac{d-1}{2}} Z_d(2/\alpha),
$$
where $C_{n,d}(\alpha)$ is given by \eqref{eq:ConstantC} and 
$Z_d(\beta)$ is given by \eqref{eq:ConstantZ}.
\end{lem}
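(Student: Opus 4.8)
The plan is to read $C_{n,d}(\alpha)$ as a Riemann sum for the integral defining $Z_d(2/\alpha)$, and then to match the limiting integrand to that of \eqref{eq:ConstantZ} by a single linear rescaling of the variables. First I would rewrite the sum \eqref{eq:ConstantC} by means of the extension $\widehat c^{(\alpha)}$ from Step~3. Since $\frac{1}{c_\mu(\alpha)c'_\mu(\alpha)}=\widehat c^{(\alpha)}(\mu_1,\dots,\mu_d)$ for every $\mu\in\mcal P_n(d)$ and $\widehat c^{(\alpha)}$ vanishes unless $r_1\ge\cdots\ge r_d\ge0$, I may enlarge the index set to the full integer lattice,
\begin{equation*}
C_{n,d}(\alpha)=\sum_{\substack{(\mu_1,\dots,\mu_d)\in\bZ^d\\ \mu_1+\cdots+\mu_d=n}}\widehat c^{(\alpha)}(\mu_1,\dots,\mu_d).
\end{equation*}
Parametrising by the $d-1$ free coordinates $\mu_1,\dots,\mu_{d-1}$ (with $\mu_d=n-\mu_1-\cdots-\mu_{d-1}$) and passing to $y_i=\sqrt{d/n}\,(\mu_i-n/d)$, each term sits at a point $(y_1,\dots,y_{d-1})$ of a cubic lattice of spacing $\sqrt{d/n}$, hence of cell volume $(d/n)^{(d-1)/2}$.

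Next I would multiply by the normalisation of the lemma. Writing $g_n(y_1,\dots,y_d)$ for the expression on the left of \eqref{eq:DensityAsymptotic}, the lemma's prefactor equals the one in \eqref{eq:DensityAsymptotic} times the cell volume $(d/n)^{(d-1)/2}=(n/d)^{-(d-1)/2}$, so that
\begin{equation*}
C_{n,d}(\alpha)\,\frac{\alpha^{2n}(2\pi)^d\(\frac nd\)^{2n+\frac{d^2+d}{2\alpha}-\frac{d-1}{2}}}{\Gamma(1/\alpha)^d e^{2n}}=\(\frac dn\)^{\frac{d-1}{2}}\sum_{(\mu_1,\dots,\mu_{d-1})}g_n(y_1,\dots,y_d).
\end{equation*}
By \eqref{eq:DensityAsymptotic} the summand converges pointwise on the interior of $\mf H_d$ to $e^{-y_1^2-\cdots-y_d^2}\prod_{i<j}(y_i-y_j)^{2/\alpha}$, while \eqref{eq:DensityDominated} provides an integrable envelope. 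Realising the sum as $\int_{\mf H_d}\widetilde g_n\,dy_1\cdots dy_{d-1}$ of the associated step function $\widetilde g_n$ and applying the dominated convergence theorem, I obtain
\begin{equation*}
\lim_{n\to\infty}C_{n,d}(\alpha)\,\frac{\alpha^{2n}(2\pi)^d\(\frac nd\)^{2n+\frac{d^2+d}{2\alpha}-\frac{d-1}{2}}}{\Gamma(1/\alpha)^d e^{2n}}=\int_{\mf H_d}e^{-\sum_i y_i^2}\prod_{1\le i<j\le d}(y_i-y_j)^{2/\alpha}\,dy_1\cdots dy_{d-1}.
\end{equation*}

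Finally I would evaluate this integral. The substitution $y_i=\alpha^{-1/2}x_i$ preserves $\mf H_d$, turns $e^{-\sum y_i^2}$ into $e^{-\frac1\alpha\sum x_i^2}=e^{-\frac\beta2\sum x_i^2}$ with $\beta=2/\alpha$, multiplies the Vandermonde product $\prod_{i<j}(y_i-y_j)^{2/\alpha}$ by $\alpha^{-\frac{d(d-1)}{2\alpha}}$, and contributes a Jacobian $\alpha^{-\frac{d-1}{2}}$ from the $d-1$ differentials. Hence, by the definition \eqref{eq:ConstantZ}, the integral equals $\alpha^{-\frac{d(d-1)}{2\alpha}-\frac{d-1}{2}}Z_d(2/\alpha)$, which is the asserted limit.

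The main obstacle is the passage from the lattice sum to the integral, where both the summand $g_n$ and the lattice vary with $n$: one must exhibit a single integrable function dominating all the step functions $\widetilde g_n$. This amounts to upgrading the bound \eqref{eq:DensityDominated}, valid at the lattice sites $y(\mu)$, to a bound that is uniform over each cell. This is possible because the displacement within a cell is $O(\sqrt{d/n})$ while the exponential envelope $e^{-2\sum_i|y_i|}$ leaves a factor $e^{-\sum_i|y_i|}$ of slack to absorb it for all large $n$, the finitely many remaining $n$ being harmless.
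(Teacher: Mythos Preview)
Your proposal is correct and follows essentially the same route as the paper: rewrite $C_{n,d}(\alpha)$ as a Riemann sum via $\widehat c^{(\alpha)}$, use \eqref{eq:DensityAsymptotic} for pointwise convergence and \eqref{eq:DensityDominated} for the dominating function, apply dominated convergence, and then rescale $y_i=\alpha^{-1/2}x_i$. Your final paragraph on upgrading the envelope from lattice points to whole cells is in fact more careful than the paper, which treats this passage as routine.
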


\begin{proof}
By \eqref{eq:DensityDominated}, we see that 
\begin{align}
& 
\frac{\alpha^{2n} \(\frac{n}{d}\)^{2n+\frac{d^2+d}{2\alpha}-\frac{d-1}{2}}}
{\Gamma(1/\alpha)^d e^{2n}} C_{n,d}(\alpha) \notag \\
<& \sum_{
\begin{subarray}{c} \lambda_1 \ge \cdots \ge \lambda_{d-1} \ge \lambda_d \ge 0  \\
\lambda_d:=n-(\lambda_1 +\cdots+\lambda_{d-1}) \end{subarray}} 
\( \sqrt{\frac{d}{n}}\)^{d-1} 
e^{-2(|\xi_{\lambda_1}^{(n)}|+\cdots +|\xi_{\lambda_d}^{(n)}|)}
P(\xi_{\lambda_1}^{(n)},\dots, \xi_{\lambda_d}^{(n)})
\label{eq:RiemannSum} 
\end{align}
for all $n$.
Each $\xi_r^{(n)}$ is picked up from the interval $I_r^{(n)}=[\xi_r^{(n)},\xi_{r+1}^{(n)})$,
whose volume is $\sqrt{\frac{d}{n}}$.
Therefore we regard the sum on \eqref{eq:RiemannSum} as  a Riemann sum,
and the sum converges
to the integral 
$$
\int_{\mf{H}_d} e^{-2(|y_1|+\cdots+|y_d|)} P(y_1,\dots,y_d) d y_1 \cdots d y_{d-1},
$$
where the integral runs over $(y_1,\dots,y_{d-1}) \in \bR^{d-1}$ 
such that $(y_1,\dots,y_d) \in \mf{H}_d$  for $y_d:=-(y_1+ \cdots +y_{d-1})$.
We can apply the dominated convergence theorem:
by \eqref{eq:DensityAsymptotic}, 
\begin{align*}
& 
\frac{\alpha^{2n} (2\pi)^d \(\frac{n}{d}\)^{2n+\frac{d^2+d}{2\alpha}-\frac{d-1}{2}}}
{\Gamma(1/\alpha)^d e^{2n}} C_{n,d}(\alpha) \\
=& \sum_{
\begin{subarray}{c} \lambda_1 \ge \cdots \ge \lambda_{d-1} \ge \lambda_d \ge 0 \\
\lambda_d:=n-(\lambda_1 +\cdots+\lambda_{d-1}) \end{subarray}} 
\( \sqrt{\frac{d}{n}}\)^{d-1} 
\widehat{c}^{(\alpha)} \(\frac{n}{d}+\xi_{\lambda_1}^{(n)}\sqrt{\frac{n}{d}},
\dots, \frac{n}{d}+\xi_{\lambda_d}^{(n)}\sqrt{\frac{n}{d}}\)
\frac{\alpha^{2n} (2\pi)^d \(\frac{n}{d}\)^{2n+\frac{d^2+d}{2\alpha}}}
{\Gamma(1/\alpha)^d e^{2n}}
\\
&\xrightarrow{n \to \infty}
\int_{\begin{subarray}{c}
(y_1,\dots,y_d) \in \mf{H}_d  \\ y_d:=-(y_1+ \cdots+y_{d-1}) \end{subarray}}
e^{-(y_1^2+\cdots +y_d^2)} \prod_{1 \le i<j \le d} (y_i - y_j)^{2/\alpha} d y_1 \cdots d y_{d-1}.
\end{align*}
Changing variables as $y_j= \alpha^{-1/2} x_j$, we obtain the lemma. 
\end{proof}

Our goal is to prove the following equation:
for any $1 \le k \le d$ and any $h_1,\dots, h_k \in \bR$, 
\begin{align*}
& \lim_{n \to \infty} \frac{1}{C_{n,d}(\alpha)} 
\sum_{\begin{subarray}{c} \lambda \in \mcal{P}_n(d) \\
\sqrt{\alpha} \xi_{\lambda_i}^{(n)} \le h_i \ ( 1\le i \le k) \end{subarray}}
\frac{1}{c_{\lambda}(\alpha)c'_{\lambda}(\alpha)} \\
=& \frac{1}{Z_d(2/\alpha)} \int_{\begin{subarray}{c} 
x_1 \ge \cdots \ge x_d \\
x_d := -(x_1 + \cdots +x_{d-1}) \\ x_i \le h_i \ ( 1 \le i \le k) \end{subarray}}
e^{-\frac{1}{\alpha} (x_1^2+ \cdots +x_d^2)} 
\prod_{1 \le i<j \le d}(x_i-x_j)^{2/\alpha} d x_1 \cdots d x_{d-1}. 
\end{align*}
Here the integral runs over $(x_1,\dots,x_{d-1}) \in \bR^{d-1}$ satisfying
$x_1 \ge \cdots \ge x_{d-1} \ge x_d$ and $x_i \le h_i \ (1\le i \le k)$ for 
$x_d:=-(x_1+\cdots+x_{d-1})$.

The rest of the proof of the theorem is similar to the proof of Lemma \ref{lem:SumLimit}. 
We write as
\begin{align*}
& \frac{1}{C_{n,d}(\alpha)} 
\sum_{\begin{subarray}{c} \lambda \in \mcal{P}_n(d) \\
\sqrt{\alpha}\xi_{\lambda_i}^{(n)} \le  h_i \ ( 1\le i \le k) \end{subarray}}
\frac{1}{c_{\lambda}(\alpha)c'_{\lambda}(\alpha)} \\
=& \frac{1}{C_{n,d}(\alpha)} 
\frac{\Gamma(1/\alpha)^d e^{2n}}
{\alpha^{2n} (2\pi)^d \(\frac{n}{d}\)^{2n+\frac{d^2+d}{2\alpha}-\frac{d-1}{2}}}\\
& \times \sum_{\begin{subarray}{c} \lambda \in \mcal{P}_n(d) \\
\sqrt{\alpha} \xi_{\lambda_i}^{(n)} \le h_i \ ( 1\le i \le k) \end{subarray}}
\( \frac{d}{n}\)^{\frac{d-1}{2}} 
\widehat{c}^{(\alpha)} \(\frac{n}{d}+\xi_{\lambda_1}^{(n)}\sqrt{\frac{n}{d}},
\dots, \frac{n}{d}+\xi_{\lambda_d}^{(n)}\sqrt{\frac{n}{d}}\)
\frac{\alpha^{2n} (2\pi)^d \(\frac{n}{d}\)^{2n+\frac{d^2+d}{2\alpha}}}
{\Gamma(1/\alpha)^d e^{2n}}.
\end{align*}
Using \eqref{eq:DensityAsymptotic} and Lemma \ref{lem:SumLimit},
as $n \to \infty$, it converges to 
\begin{align*}
&  \frac{\alpha^{\frac{d(d-1)}{2\alpha}+\frac{d-1}{2}} }{Z_d(2/\alpha)}
\int_{\begin{subarray}{c} 
y_1 \ge \cdots \ge y_d \\
y_d := -(y_1 + \cdots +y_{d-1}) \\ \sqrt{\alpha} y_i \le  h_i \ ( 1 \le i \le k) \end{subarray}}
e^{-(y_1^2+ \cdots +y_d^2)} \prod_{1 \le i<j \le d}(y_i-y_j)^{2/\alpha} d y_1 \cdots d y_{d-1} \\
=& \frac{1}{Z_d(2/\alpha)} \int_{\begin{subarray}{c} 
x_1 \ge \cdots \ge x_d \\
x_d := -(x_1 + \cdots +x_{d-1}) \\ x_i \le h_i \ ( 1 \le i \le k) \end{subarray}}
e^{-\frac{1}{\alpha} (x_1^2+ \cdots +x_d^2)} 
\prod_{1 \le i<j \le d}(x_i-x_j)^{2/\alpha} d x_1 \cdots d x_{d-1}. 
\end{align*}
Thus we have proved Theorem \ref{maintheorem}.

\section{A short proof of Regev's asymptotic theorem} \label{sec:Regev}

Applying the technique used in the previous section,
we give a simple proof of 
the following asymptotic theorem by Regev \cite{Regev}.

Let $f^\lambda$ be the degree of the irreducible representation of 
the symmetric group $\mf{S}_{|\lambda|}$ associated with $\lambda$.
Equivalently, $f^\lambda$ is the number of standard Young tableaux of shape $\lambda$.

\begin{thm}[Regev] \label{RegevTheorem}
Let a positive real number $\beta$ and a positive integer $d$ be fixed.
As $n \to \infty$, 
$$
\sum_{\lambda \in \mcal{P}_n(d)} (f^\lambda)^\beta \sim 
\( (2\pi)^{-\frac{d-1}{2}} d^{n+\frac{d^2}{2}}  n^{-\frac{(d-1)(d+2)}{4}} \)^\beta
n^{\frac{d-1}{2}} Z'_d(\beta),
$$
where 
$$
Z'_d(\beta)= \int_{\mathfrak{H}_d} e^{-\frac{d \beta}{2} (x_1^2+ \cdots+x_d^2)}
\prod_{1 \le i<j \le d} (x_i-x_j)^\beta d x_1 \cdots d x_{d-1}.
$$
\end{thm}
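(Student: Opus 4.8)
The plan is to run the machinery of Section~\ref{sec:Proof} at the special value $\alpha=1$, but to take its $\beta/2$-th power rather than using it directly. The starting point is the hook formula \eqref{eq:hookformula}, which gives $f^\lambda = n!/H_\lambda$, together with the identity $H_\lambda = c_\lambda(1)=c'_\lambda(1)$. Hence for $\lambda \in \mcal{P}_n(d)$ one has $1/H_\lambda^2 = 1/(c_\lambda(1)c'_\lambda(1)) = \widehat{c}^{(1)}(\lambda_1,\dots,\lambda_d)$, and therefore
$$
(f^\lambda)^\beta = (n!)^\beta \left(\widehat{c}^{(1)}(\lambda_1,\dots,\lambda_d)\right)^{\beta/2}.
$$
This reduces the problem to understanding the $\beta/2$-th power of $\widehat{c}^{(1)}$, for which Step~3 already supplies both a pointwise asymptotic and a uniform domination bound.

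First I would specialize \eqref{eq:DensityAsymptotic} to $\alpha=1$ (so that $2/\alpha = 2$). Writing $r_i = \frac{n}{d}+y_i\sqrt{\frac{n}{d}}$ for $(y_1,\dots,y_d)\in\mf{H}_d$ and raising the resulting relation to the power $\beta/2$ yields a pointwise asymptotic of the form
$$
(f^\lambda)^\beta \sim (n!)^\beta \left(\frac{e^{2n}}{(2\pi)^d (n/d)^{2n+d(d+1)/2}}\right)^{\beta/2} e^{-\frac{\beta}{2}\sum_i y_i^2}\prod_{1\le i<j\le d}(y_i-y_j)^\beta.
$$
Next I would read the sum $\sum_{\lambda\in\mcal{P}_n(d)}(f^\lambda)^\beta$ as a Riemann sum over $\mf{H}_d$: the $d-1$ free coordinates $y_1,\dots,y_{d-1}$ run over the lattice of spacing $\sqrt{d/n}$, exactly as in the proof of Lemma~\ref{lem:SumLimit}. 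Multiplying and dividing by the cell volume $(d/n)^{(d-1)/2}$ converts the sum into an integral in the limit.

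The interchange of the limit with the summation is the only genuine point requiring care, and it is handled by dominated convergence. Raising the bound \eqref{eq:DensityDominated} (at $\alpha=1$) to the power $\beta/2$ produces a dominating function $e^{-\beta\sum_i|y_i|}P(y_1,\dots,y_d)^{\beta/2}$; since $\beta>0$ this still has exponential decay beating the polynomial growth \eqref{eq:orderP}, so it is integrable over $\mf{H}_d$ and the hypotheses of the dominated convergence theorem are met. This gives
$$
\sum_{\lambda\in\mcal{P}_n(d)}(f^\lambda)^\beta \sim (n!)^\beta \left(\frac{e^{2n}}{(2\pi)^d (n/d)^{2n+d(d+1)/2}}\right)^{\beta/2} (n/d)^{\frac{d-1}{2}}\int_{\mf{H}_d} e^{-\frac{\beta}{2}\sum_i y_i^2}\prod_{1\le i<j\le d}(y_i-y_j)^\beta\, dy_1\cdots dy_{d-1}.
$$

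Finally I would clean up. Stirling's formula gives $(n!)^\beta \sim (2\pi)^{\beta/2}n^{\beta(n+1/2)}e^{-\beta n}$, and the substitution $y_i=\sqrt{d}\,x_i$ turns the remaining integral into $d^{\beta d(d-1)/4+(d-1)/2}\,Z'_d(\beta)$, using that the condition $\sum y_i=0$ and the ordering are preserved, that $\prod_{i<j}(y_i-y_j)^\beta$ scales by $d^{\beta d(d-1)/4}$, and that $dy_1\cdots dy_{d-1}$ scales by $d^{(d-1)/2}$. Collecting all factors of $e$ (which cancel), of $2\pi$ (leaving $(2\pi)^{-\beta(d-1)/2}$), and of $n$ and $d$ (a routine bookkeeping that reproduces $n^{-\beta(d-1)(d+2)/4}\,n^{(d-1)/2}$ and $d^{\beta(n+d^2/2)}$) yields exactly the claimed asymptotic. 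I expect this last bookkeeping of exponents to be the most error-prone part, though it is conceptually straightforward; the substantive step is the uniform integrability that licenses passing from the Riemann sum to the integral.
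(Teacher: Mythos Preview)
Your proof is correct and follows essentially the same route as the paper's. The only cosmetic difference is that the paper writes $f^\lambda = n!/c_\lambda(1) = n!\,F_1F_3$ (using only one hook product) and then raises to the $\beta$-th power, whereas you write $(f^\lambda)^2 = (n!)^2\,\widehat{c}^{(1)}$ and raise to the power $\beta/2$; since $F_2=F_1$ and $F_4=F_3$ at $\alpha=1$ these are literally the same expression, and the Riemann-sum/dominated-convergence step and the final change of variables $y_i=\sqrt{d}\,x_i$ match the paper exactly.
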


\begin{proof}
Recall the hook formula \eqref{eq:hookformula}: $f^\lambda=n!/H_\lambda$.
By Lemma \ref{lem:ccExplicit}, we have
$$
f^\lambda= \frac{n!}{c_{\lambda}(1)}=n! \prod_{1 \le i <j \le d} (\lambda_i-\lambda_j+j-i) \cdot 
\prod_{i=1}^d \frac{1}{\Gamma(\lambda_i+d-i+1)}.
$$
Replacing each $\lambda_i$ by $r_i$ in  this identity, 
we can define the value $f^{(r_1,\dots,r_d)}$ for
all non-negative real numbers $r_1,\dots,r_d$ such that $r_1 \ge \cdots \ge r_d \ge 0$ and 
$r_1+\dots+r_d=n$.
Applying Lemma \ref{lem:DenstiyAsymptotic1},
we see that for each $(y_1,\dots,y_d) \in \mf{H}_d$, putting
$r_i=\frac{n}{d}+y_i \sqrt{\frac{n}{d}} \ (1 \le i \le d)$,
$$
\lim_{n\to\infty}
\frac{\(\frac{n}{d}\)^{n+\frac{d^2+d}{4}}}{n! e^n} 
f^{(r_1,\dots,r_d) }
=\prod_{1 \le i<j \le d} (y_i-y_j)
\cdot \prod_{i=1}^d \frac{e^{-\frac{1}{2}y_i^2} }{\sqrt{2\pi}}.
$$
Furthermore, by the  
Stirling formula $n! \sim \sqrt{2\pi} n^{n+\frac{1}{2}}e^{-n}$, we have
$$
\lim_{n \to\infty} (2\pi)^{\frac{d-1}{2}}
d^{-n-\frac{d^2+d}{4}}  n^{\frac{d^2+d}{4}-\frac{1}{2}}
f^{(r_1,\dots,r_d)} =  
\prod_{1 \le i<j \le d} (y_i-y_j)
\cdot \prod_{i=1}^d e^{-\frac{1}{2}y_i^2}.
$$
On the other hand, as in the proof of \eqref{eq:Dominated}, we can prove that
there exists a polynomial $P$ such that 
$$
d^{-n-\frac{d^2+d}{4}}  n^{\frac{d^2+d}{4}-\frac{1}{2}}
f^{(r_1,\dots,r_d)} 
< P(y_1,\dots,y_d) e^{-|y_1|-\cdots-|y_d|}
$$
for all $n$ and $y_1,\dots,y_d$.

Consider 
$$
\((2\pi)^{\frac{d-1}{2}} d^{-n-\frac{d^2+d}{4}} n^{ \frac{(d+2)(d-1)}{4}} \)^\beta
\(\frac{d}{n}\)^{\frac{d-1}{2}}
\sum_{\lambda \in \mcal{P}_n(d)}  (f^\lambda)^\beta.
$$
As in the proof of Lemma \ref{lem:SumLimit}, 
we can apply the dominated convergence theorem 
with a dominated function of the form $P(y_1,\dots,y_d)^\beta e^{-\beta(|y_1|+\cdots+|y_d|)}$.
Then
\begin{align*}
& \lim_{n \to \infty} 
\((2\pi)^{\frac{d-1}{2}} d^{-n-\frac{d^2+d}{4}} n^{ \frac{(d+2)(d-1)}{4}} \)^\beta
\sum_{\lambda \in \mcal{P}_n(d)} \(\frac{d}{n}\)^{\frac{d-1}{2}} (f^\lambda)^\beta \\
=& 
\int_{\mf{H}_d} e^{-\frac{\beta}{2} (y_1^2+\cdots+y_d^2)} \prod_{1 \le i<j \le d}
(y_i-y_j)^\beta  d y_1 \cdots d y_{d-1} 
=  Z_d(\beta).
\end{align*}
It is easy to see that 
$Z_d(\beta)= d^{\frac{d(d-1)}{4} \beta + \frac{d-1}{2}} Z_d'(\beta)$.
Thus, we have proved the theorem.
\end{proof}

Using Lemma \ref{lem:SumLimit} and the Stirling formula, we obtain the following.

\begin{prop}
As $n \to \infty$,
$$
\sum_{\lambda \in \mcal{P}_n(d)} \frac{n!}{c_{\lambda}(\alpha)} \frac{n!}{c'_{\lambda}(\alpha)} \\
\sim  \frac{\Gamma(1/\alpha)^{d} d^{2n+\frac{d^2}{\alpha}}}
{(2\pi)^{d-1} \alpha^{2n+\frac{d^2-d}{2\alpha}+\frac{d-1}{2}} 
n^{\frac{d^2+d}{2\alpha}-\frac{d+1}{2}} }
Z_d'(2/\alpha). 
$$
\end{prop}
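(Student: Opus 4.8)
The plan is to recognize the left-hand sum as the normalization constant $C_{n,d}(\alpha)$ of \eqref{eq:ConstantC} multiplied by $(n!)^2$, and then simply combine asymptotics that are already in hand. Since neither $c_\lambda(\alpha)$ nor $c'_\lambda(\alpha)$ involves any factorial, I would first record the identity
\[
\sum_{\lambda \in \mcal{P}_n(d)} \frac{n!}{c_{\lambda}(\alpha)} \frac{n!}{c'_{\lambda}(\alpha)}
= (n!)^2 \sum_{\lambda \in \mcal{P}_n(d)} \frac{1}{c_{\lambda}(\alpha) c'_{\lambda}(\alpha)}
= (n!)^2\, C_{n,d}(\alpha).
\]
This reduces the proposition to inserting the asymptotic form of $C_{n,d}(\alpha)$ from Lemma \ref{lem:SumLimit} and the Stirling estimate $(n!)^2 \sim 2\pi\, n^{2n+1} e^{-2n}$.

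Next I would read off from Lemma \ref{lem:SumLimit} that
\[
C_{n,d}(\alpha) \sim
\alpha^{-\frac{d(d-1)}{2\alpha}-\frac{d-1}{2}} Z_d(2/\alpha)
\cdot
\frac{\Gamma(1/\alpha)^d e^{2n}}
{\alpha^{2n} (2\pi)^d \(\frac{n}{d}\)^{2n+\frac{d^2+d}{2\alpha}-\frac{d-1}{2}}},
\]
multiply through by $(n!)^2 \sim 2\pi\, n^{2n+1} e^{-2n}$, and note the two immediate simplifications: the factor $e^{-2n}$ from Stirling cancels the $e^{2n}$ in the numerator, while $2\pi/(2\pi)^d = (2\pi)^{-(d-1)}$ produces the announced power of $2\pi$. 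After these cancellations the task is purely to collect the powers of $n$, $d$, and $\alpha$.

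This exponent bookkeeping is the only real work, and it is entirely mechanical. Splitting $\(\frac{n}{d}\)^{-\(2n+\frac{d^2+d}{2\alpha}-\frac{d-1}{2}\)}$ into separate powers of $n$ and $d$, the factor $n^{2n+1}$ combines with the $n$-part to leave exponent $\frac{d+1}{2}-\frac{d^2+d}{2\alpha} = -\(\frac{d^2+d}{2\alpha}-\frac{d+1}{2}\)$, which is the claimed power of $n$; combining $\alpha^{-2n}$ with $\alpha^{-\frac{d(d-1)}{2\alpha}-\frac{d-1}{2}}$ gives the power $-\(2n+\frac{d^2-d}{2\alpha}+\frac{d-1}{2}\)$ of $\alpha$. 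For the power of $d$, I would substitute the relation $Z_d(2/\alpha) = d^{\frac{d(d-1)}{2\alpha}+\frac{d-1}{2}} Z_d'(2/\alpha)$ established in the proof of Theorem \ref{RegevTheorem}: this replaces $Z_d(2/\alpha)$ by $Z_d'(2/\alpha)$, and adding its $d$-exponent to the $d$-part already present yields $d^{2n+\frac{d^2}{\alpha}}$, the two contributions $\pm\frac{d-1}{2}$ cancelling. Assembling these pieces reproduces the stated asymptotics exactly. Because every step is an elementary manipulation of the already-proved Lemma \ref{lem:SumLimit} together with Stirling's formula, I anticipate no conceptual obstacle; the only point requiring care is keeping the three families of exponents straight through the cancellations.
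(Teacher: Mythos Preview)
Your proposal is correct and follows exactly the approach the paper itself indicates: the paper's entire proof reads ``Using Lemma~\ref{lem:SumLimit} and the Stirling formula, we obtain the following,'' and you have carried out precisely that combination, including the substitution $Z_d(2/\alpha)=d^{\frac{d(d-1)}{2\alpha}+\frac{d-1}{2}}Z_d'(2/\alpha)$ from the proof of Theorem~\ref{RegevTheorem}. Your exponent bookkeeping checks out.
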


This proposition may be seen as a Jack analogue (of the $\beta=2$ case) of
Theorem \ref{RegevTheorem}.

\medskip
\medskip

\noindent
{\bf Acknowledgements}

\noindent
This research was supported by the Grant-in-Aid for JSPS Fellows.


\end{document}